\numberwithin{equation}{section}
\newtheorem{thm}{Theorem}[section]
\newtheorem{lem}[thm]{Lemma}
\newtheorem{defn}[thm]{Definition}
\def\XXint#1#2#3{{\setbox0=\hbox{$#1{#2#3}{\int}$ }
		\vcenter{\hbox{$#2#3$ }}\kern-.6\wd0}}
\newcommand{\io}{\int_\Omega}
\newcommand{\ot}{\Omega_t}
\newcommand{\oT}{\Omega_T}
\newcommand{\iot}{\int_{\Omega_t}}
\newcommand{\ioT}{\int_{\Omega_T}}
\newcommand{\ob}{\partial\Omega}
\newcommand{\pt}{\partial_t}
\newcommand{\sr}{\sqrt{\rho}}
\newcommand{\sm}{\sqrt{\mu}}
\newcommand{\sre}{\sqrt{\re}}
\newcommand{\sme}{\sqrt{\me}}
\newcommand{\ve}{\varepsilon}
\newcommand{\ra}{\rightarrow}
\newcommand{\vp}{\varphi}
\newcommand{\re}{\rho_\ve}
\newcommand{\me}{\mu_\ve}
\newcommand{\ret}{(\rho_\ve)^{(\sigma)}}
\newcommand{\met}{(\mu_\ve)^{(\sigma)}}
\newcommand{\ue}{u_\ve}
\newcommand{\wo}{w_1}
\newcommand{\wt}{w_2}
\newcommand{\wok}{w_1^{(k)}}
\newcommand{\wtk}{w_2^{(k)}}
\newcommand{\wob}{\overline{w}_1^{(j)}}
\newcommand{\rk}{\rho_k}
\newcommand{\mk}{\mu_k}
\newcommand{\rtj}{\tilde{\rho}_j}
\newcommand{\rbj}{\overline{\rho}_j}
\newcommand{\mtj}{\tilde{\mu}_j}
\newcommand{\mbj}{\overline{\mu}_{j}}
\begin{document}
	\title[a partially parabolic cross-diffusion system]{Existence theorem for a partially parabolic cross-diffusion system  }
	\author{Xiangsheng Xu}\thanks
	{Department of Mathematics and Statistics, Mississippi State
		University, Mississippi State, MS 39762.
		{\it Email}: xxu@math.msstate.edu.}
	\keywords{ Partially parabolic; cross-diffusion; population dynamics; approximation scheme
	} \subjclass{35K51, 35Q92, 35M33, 35D30, 35B45.}
	\begin{abstract} We study an initial boundary value problem for a cross-diffusion system in population dynamics. The mathematical challenge is due to the fact that the determinant of the coefficient matrix of the system changes signs. As a result, the system is only partially parabolic. We design an approximation scheme. The sequence of approximate solutions generated by our scheme converges and its limit  satisfies the original system in the parabolic region. It remains open if one can construct a vector-valued function that satisfies the system in both the parabolic region and the anti-parabolic one.
	\end{abstract}
	\maketitle

	\section{Introduction}\label{sec1}
	Let $\Omega$ be a bounded domain in $\mathbb{R}^N$ with Lipschitz boundary $\ob$. For each $T>0$ we consider the initial boundary value problem
	\begin{eqnarray}
		\rho_t&=&\Delta\rho+\textup{div}(\rho\nabla\mu)\ \ \mbox{in $\Omega_T\equiv\Omega\times(0,T)$},\label{eq1}\\
		\mu_t&=&\Delta\mu+\textup{div}(\mu\nabla\rho)\ \ \mbox{in $\Omega_T$},\label{eq2}\\
		\nabla\rho\cdot\mathbf{n}&=&\nabla\mu\cdot\mathbf{n}=0\ \ \mbox{on $\Sigma_T\equiv\ob\times(0,T)$},\label{eq3}\\
		\rho(x,0)&=&\rho_0(x),\ \mu(x,0)=\mu_0(x)\ \ \mbox{on $\Omega$},\label{eq4}
	\end{eqnarray}
where $\mathbf{n}$ is the unit outward to $\ob$. 

This problem can be proposed as a model for the motion of two competing populations \cite{BRYZ,DSY}. In this case, $\rho$ and $\mu$ are the two population densities. We refer the reader to \cite{AB,BRYZ} for detailed information on this.
Our interest here is the existence assertion for the problem.
The mathematical difficulty is due to the fact that the coefficient matrix
\begin{equation}\label{adef}
	A=\left(\begin{array}{ll}
		1&\rho\\
		\mu&1
	\end{array}\right)
\end{equation}
has an eigenvalue $1-\sqrt{\rho\mu}$, which changes sign. As observed in \cite{BRYZ,DSY}, the assumption $\sqrt{\rho_0(x)\mu_0(x)}<1$ can only ensure that $\sqrt{\rho\mu}\leq 1$ for a short time. Thus, our problem here seems to lie outside the scope of \cite{CJ}. To clarify this further, we
consider the general problem
\begin{eqnarray}
	\pt \rho-\mbox{div}\left(a_{11}(\rho,\mu)\nabla \rho+a_{12}(\rho,\mu)\nabla\mu\right)&=&0\ \ \mbox{in $\oT$},\label{ge1}\\
	\pt \mu-\mbox{div}\left(a_{21}(\rho,\mu)\nabla \rho+a_{22}(\rho,\mu)\nabla\mu\right)&=&0\ \ \mbox{in $\oT$},\label{ge2}\\
	\left(a_{11}(\rho,\mu)\nabla \rho+a_{12}(\rho,\mu)\nabla\mu\right)\cdot\mathbf{n}&=&0\ \ \mbox{on $\Sigma_T$},\label{ge3}\\
	\left(a_{21}(\rho,\mu)\nabla \rho+a_{22}(\rho,\mu)\nabla\mu\right)\cdot\mathbf{n}&=&0\ \ \mbox{on $\Sigma_T$},\label{ge4}\\
	(\rho(x,0), \mu(x,0))&=&(\rho_0(x), \mu_0(x))\ \ \mbox{on $\Omega$.}\label{ge5}
\end{eqnarray}
The idea in \cite{CJ} is to seek a convex function $\psi(\rho,\mu)$ so that $t\ra\io \psi(\rho,\mu)dx$ is a Lyapunov functional along the solution to \eqref{ge1}-\eqref{ge5}.  A simple calculation shows
\begin{eqnarray}
	\frac{d}{dt}\io\psi(\rho,\mu) dx&=&\io\left(\psi_\rho\pt \rho+\psi_\mu\pt \mu\right)\nonumber\\
	&=&-\io\left(a_{11}(\rho,\mu)\nabla\rho+a_{12}(\rho,\mu)\nabla\mu\right)\cdot\left(\psi_{\rho\rho}\nabla\rho+\psi_{\rho\mu}\nabla\mu\right)dx\nonumber\\
	&&-\io\left(a_{21}(\rho,\mu)\nabla\rho+a_{22}(\rho,\mu)\nabla\mu\right)\cdot\left(\psi_{\rho\mu}\nabla\rho+\psi_{\mu\mu}\nabla\mu\right)dx\nonumber\\
	&=&-\io\left[\left(a_{11}\psi_{\rho\rho}+a_{21}\psi_{\rho\mu}\right)|\nabla\rho|^2+\left(a_{12}\psi_{\rho\mu}+a_{22}\psi_{\mu\mu}\right)|\nabla\mu|^2\right]dx\nonumber\\
	&&-\io\left(a_{12}\psi_{\rho\rho}+(a_{11}+a_{22})\psi_{\rho\mu}+a_{21}\psi_{\mu\mu}\right)\nabla\rho\cdot\nabla\mu dx.\label{lyp10}
\end{eqnarray}
According to Proposition 1.3 in \cite{PX},
we must choose $\psi$ so that
\begin{eqnarray}
	a_{11}\psi_{\rho\rho}+a_{21}\psi_{\rho\mu}&\geq&0,\\
	a_{12}\psi_{\rho\mu}+a_{22}\psi_{\mu\mu}&\geq& 0,\\
	\left(a_{12}\psi_{\rho\rho}+(a_{11}+a_{22})\psi_{\rho\mu}+a_{21}\psi_{\mu\mu}\right)^2
	&\leq&4\left(a_{11}\psi_{\rho\rho}+a_{21}\psi_{\rho\mu}\right)\left(a_{12}\psi_{\rho\mu}+a_{22}\psi_{\mu\mu}\right).\label{pin}
\end{eqnarray}
Substitute \eqref{adef} into
\eqref{pin}  to get
\begin{eqnarray}
	(\rho\psi_{\rho\rho}+2\psi_{\mu\rho}+\mu\psi_{\mu\mu})^2&\leq&4(\psi_{\rho\rho}+\mu\psi_{\mu\rho})(	\rho\psi_{\mu\rho}+ \psi_{\mu\mu}).
\end{eqnarray}
Simplifying this equation yields
\begin{eqnarray}\label{lyp3}
		4(1-\mu\rho)\psi_{\mu\rho}^2+\rho^2\psi_{\rho\rho}^2+\mu^2\psi_{\mu\mu}^2+2(\rho\mu-2)\psi_{\rho\rho}\psi_{\mu\mu}\leq0.
	\end{eqnarray}
	Denote by $\nabla^2\psi$ the Hessian matrix of $\psi$, i.e.,
	$$\nabla^2\psi=\left(\begin{array}{ll}
		\psi_{\rho\rho}&\psi_{\mu\rho}\\
		\psi_{\rho\mu}&\psi_{\mu\mu}
	\end{array}\right).$$
	We can write \eqref{lyp3} as
	\begin{eqnarray}
		4(1-\mu\rho)\textup{det}\nabla^2\psi\geq\left(\rho\psi_{\rho\rho}-\mu\psi_{\mu\mu}\right)^2.\label{lyp2}
	\end{eqnarray}
	This asserts that $\psi$ can only be convex on the set $\{\rho\mu\leq 1\}$. That is, it is not possible for us to find a convex function $\psi(\rho,\mu)$ on $[0,\infty)\times [0,\infty)$ so that $t\ra\io \psi(\rho,\mu)dx$ is decreasing.
	
	 In \cite{DSY}, the authors formulated the problem as a gradient flow of a functional in a metric space. Unfortunately, the functional is not convex, and the authors must replace it with its convex envelope in order to obtain an existence theorem via the gradient flow theory. Our objective here is to offer a new approach.


To motivate our approach, we proceed to derive a priori estimates \cite{BRYZ}. For this purpose, we assume that $\rho, \mu$ are positive and sufficiently regular. Write the right hand side of \eqref{eq1} as
\begin{eqnarray}
	\Delta\rho+\textup{div}(\rho\nabla\mu)&=&\textup{div}\left[\rho\nabla(\ln\rho+\mu)\right].
\end{eqnarray}
With this in mind, we use $\ln\rho+\mu$ as a test function in \eqref{eq1} to deduce
\begin{eqnarray}
	\io(\ln\rho+\mu)\rho_t dx&=&-\io\rho\left|\nabla(\ln\rho+\mu)\right|^2dx=-4\io\left|\nabla\sr+\sqrt{\rho\mu}\nabla\sm\right|^2dx.\label{cd1}
\end{eqnarray}
Similarly, we can deduce from \eqref{eq2} that
\begin{eqnarray}
	\io(\ln\mu+\rho)\mu_t dx&=&-4\io|\nabla\sm+\sqrt{\rho\mu}\nabla\sr|^2dx.
\end{eqnarray}
Adding this to \eqref{cd1} yields
\begin{eqnarray}
\lefteqn{	\io(\rho\ln\rho-\rho+\mu\ln\mu-\mu+\rho\mu)dx}\nonumber\\
&&+4\iot\left(|\nabla\sr+\sqrt{\rho\mu}\nabla\sm|^2+|\nabla\sm+\sqrt{\rho\mu}\nabla\sr|^2\right)dxd\tau\nonumber\\
&=&\io(\rho_0\ln\rho_0-\rho_0+\mu_0\ln\mu_0-\mu_0+\rho_0\mu_0)dx,\label{cd2}
\end{eqnarray}
where 
$$\ot=\Omega\times(0,t).$$
Recall the elementary inequality
\begin{equation}\label{eli}
	2\left(|x|^2+|y|^2\right)\geq |x\pm y|^2\ \ \mbox{for $x,y\in \mathbb{R}^N$,}
\end{equation}
from whence follows
\begin{eqnarray*}
	2\left(|\nabla\sr+\sqrt{\rho\mu}\nabla\sm|^2+|\nabla\sm+\sqrt{\rho\mu}\nabla\sr|^2\right)&\geq &(1+\sqrt{\rho\mu})^2|\nabla\sr+\nabla\sm|^2,\label{ues}\\
	2\left(|\nabla\sr+\sqrt{\rho\mu}\nabla\sm|^2+|\nabla\sm+\sqrt{\rho\mu}\nabla\sr|^2\right)&\geq &(1-\sqrt{\rho\mu})^2|\nabla\sr-\nabla\sm|^2.\label{ves}
\end{eqnarray*}
Combine the preceding results to deduce
\begin{eqnarray}
	\lefteqn{\sup_{0\leq t\leq T}	\io(\rho\ln\rho-\rho+\mu\ln\mu-\mu+\rho\mu)dx}\nonumber\\
	&&+2\ioT(1+\sqrt{\rho\mu})^2|\nabla\sr+\nabla\sm|^2dxdt+2\ioT(1-\sqrt{\rho\mu})^2|\nabla\sr-\nabla\sm|^2dxdt\nonumber\\
	&\leq& 2\io(\rho_0\ln\rho_0-\rho_0+\mu_0\ln\mu_0-\mu_0+\rho_0\mu_0)dx.\label{me}
\end{eqnarray}
The so-called entropy variables as suggested in \cite{CDJ} are
$$u_1=\ln\rho+\mu,\ \ u_2=\ln\mu+\rho.$$
Unfortunately, the Jacobian for the transformation is $\frac{1-\rho\mu}{\rho\mu}$, which changes sign. 
To circumvent this problem, we consider
\begin{equation}\label{ho0}
		u=\sr+\sm,\ \ 	v=\sr-\sm.
\end{equation}
To derive an equation satisfied by $u$, we first write system \eqref{eq1}-\eqref{eq2} in terms of $\sqrt{\rho}, \sqrt{\mu}$. Starting with \eqref{eq1}, we obtain
\begin{eqnarray*}
	2\sqrt{\rho}\pt\sr&=&\mbox{div}\left[2\sr\left(\nabla\sr+\sqrt{\rho\mu}\nabla\sm\right)\right]\nonumber\\
	&=&2\sr\mbox{div}\left(\nabla\sr+\sqrt{\rho\mu}\nabla\sm\right)+2\nabla\sr\cdot\left(\nabla\sr+\sqrt{\rho\mu}\nabla\sm\right),
\end{eqnarray*}
from whence follows
\begin{eqnarray*}\label{sre}%
	\pt\sr&=&\mbox{div}\left(\nabla\sr+\sqrt{\rho\mu}\nabla\sm\right)+
	\frac{1}{\sr}\left(|\nabla\sr|^2+\sqrt{\rho\mu}\nabla\sm\cdot\nabla\sr\right).
\end{eqnarray*}
Similarly,  
we can obtain from \eqref{eq2} that 
\begin{equation*}\label{ho1}
	\pt\sm=\mbox{div}\left(\nabla\sm+\sqrt{\rho\mu}\nabla\sr\right)+\frac{1}{\sm}\left(|\nabla\sm|^2+\sqrt{\rho\mu}\nabla\sm\cdot\nabla\sr\right).
\end{equation*}
Add and subtract the preceding two equations successively to deduce
\begin{eqnarray}
	\pt u&=&\mbox{div}\left[(1+\sqrt{\rho\mu})\nabla u\right]+
	\frac{1}{\sr}\left(|\nabla\sr|^2+\sqrt{\rho\mu}\nabla\sm\cdot\nabla\sr\right)\nonumber\\
	&&+\frac{1}{\sm}\left(|\nabla\sm|^2+\sqrt{\rho\mu}\nabla\sm\cdot\nabla\sr\right)
,\label{ho4}\\
\pt v&=&\mbox{div}\left[(1-\sqrt{\rho\mu})\nabla v\right]+	+	\frac{1}{\sr}\left(|\nabla\sr|^2+\sqrt{\rho\mu}\nabla\sm\cdot\nabla\sr\right)\nonumber\\
&&-\frac{1}{\sm}\left(|\nabla\sm|^2+\sqrt{\rho\mu}\nabla\sm\cdot\nabla\sr\right).\label{ho3}
\end{eqnarray}
The equation for $v$ is parabolic only on the set $\{\sqrt{\rho\mu}< 1\}$, while outside the set the diffusion coefficient in the equation becomes negative and the resulting problem is ill-posed. A reasonable solution would be to replace the  coefficient $1-\sqrt{\rho\mu}$ in \eqref{ho3} with $\left(1-\sqrt{\rho\mu}\right)^+$.
Introduce the function
\begin{equation}\label{tld}
	\theta_\ell(s)=\left\{\begin{array}{ll}
	\ell&\mbox{if $s\geq \ell$,}\\
		s&\mbox{if $0<s<\ell$,}\\
		0&\mbox{if $s\leq 0,\ \ \ell>0$.}
	\end{array}\right.
\end{equation}
Subsequently, 
$$(1-\sqrt{\rho\mu})^+=1-\theta_1(\sqrt{\rho\mu}).$$
We are led to  the system
\begin{eqnarray}
	\pt u&=&\mbox{div}\left[(1+\theta_1(\sqrt{\rho\mu}))\nabla u\right]+
	\frac{1}{\sr}\left(|\nabla\sr|^2+\theta_1(\sqrt{\rho\mu})\nabla\sm\cdot\nabla\sr\right)\nonumber\\
		&&+\frac{1}{\sm}\left(|\nabla\sm|^2+\theta_1(\sqrt{\rho\mu})\nabla\sm\cdot\nabla\sr\right)
	,\label{ho2}\\
	\pt v&=&\mbox{div}\left[(1-\theta_1(\sqrt{\rho\mu}))\nabla v\right]
	+	\frac{1}{\sr}\left(|\nabla\sr|^2+\theta_1(\sqrt{\rho\mu})\nabla\sm\cdot\nabla\sr\right)\nonumber\\
	&&-\frac{1}{\sm}\left(|\nabla\sm|^2+\theta_1(\sqrt{\rho\mu})\nabla\sm\cdot\nabla\sr\right).
\label{hop10}
\end{eqnarray}
Here we have replaced $\sqrt{\rho\mu}$ with $\theta_1(\sqrt{\rho\mu})$ throughout system \eqref{ho4}-\eqref{ho3}. This is done so that we can still write the right-hand side terms in \eqref{ho2} and \eqref{hop10} as divergences. Indeed,
add and subtract the two equations \eqref{ho2} and \eqref{hop10}, respectively, and keep \eqref{ho0} in mind to get
 \begin{eqnarray}
 	\lefteqn{	\pt\sr-\mbox{div}\left[\nabla\sr+\theta_1(\sqrt{\rho\mu})\nabla\sm\right]}\nonumber\\
 	&=&\frac{1}{\sr}\left(|\nabla\sr|^2+\theta_1(\sqrt{\rho\mu})\nabla\sr\cdot\nabla\sm\right)\ \ \mbox{in $\oT$},\label{ax1}\\
 	\lefteqn{	\pt\sm-\mbox{div}\left[\nabla\sm+\theta_1(\sqrt{\rho\mu})\nabla\sr\right]}\nonumber\\
 	&=&\frac{1}{\sm}\left(|\nabla\sm|^2+\theta_1(\sqrt{\rho\mu})\nabla\sr\cdot\nabla\sm\right)\ \ \mbox{in $\oT$}.\label{ae1}
 \end{eqnarray}
Multiply through \eqref{ax1} by $2\sr$ to derive 
\begin{eqnarray}
	\pt\rho&=&\mbox{div}\left[\nabla\rho+\frac{\sqrt{\rho}\theta_1(\sqrt{\rho\mu})}{\sqrt{\mu}}\nabla\mu\right]\ \ \mbox{in $\oT$.}\label{pl1}
\end{eqnarray}
Similarly, we can deduce from \eqref{ae1} that
\begin{eqnarray}
	\pt\mu&=&\mbox{div}\left[\nabla\mu+\frac{\sqrt{\mu}\theta_1(\sqrt{\rho\mu})}{\sqrt{\rho}}\nabla\rho\right]	\ \ \mbox{in $\oT$.}\label{pl2}
\end{eqnarray}
By \eqref{tld}, we have
\begin{equation*}\label{pl0}
	\frac{\sqrt{\rho}\theta_1(\sqrt{\rho\mu})}{\sqrt{\mu}}=\frac{\theta_1(\sqrt{\rho\mu})}{\sqrt{\rho\mu}}\rho\leq\rho,\ \ \frac{\sqrt{\mu}\theta_1(\sqrt{\rho\mu})}{\sqrt{\rho}}=\frac{\theta_1(\sqrt{\rho\mu})}{\sqrt{\rho\mu}}\mu\leq \mu. 
\end{equation*}
However, the equality holds on the set $\{\rho\mu\leq 1\}$.
As we mentioned earlier, a modification to \eqref{eq1}-\eqref{eq4} is also carried out in \cite{DSY} under a different context.
This seems unavoidable in view of the observation  in \cite{DSY} that even if $\rho_0\mu_0<1$ we can not expect $\rho\mu\leq 1$ for all $t>0$. In spite of our modification here, the resulting system \eqref{pl1}-\eqref{pl2} is still a cross-diffusion one with degeneracy. Degeneracy is due to the fact that
 the determinant of the coefficient matrix for the system is only non-negative. 
As a result, we are not able to obtain any estimates on $\nabla\rho, \nabla\mu$ over the whole domain $\oT$. To get around this issue, 
we substitute $u=\sqrt{\rho}+\sqrt{\mu}$ into \eqref{pl1} and \eqref{pl2} to derive
\begin{eqnarray}
	\pt\rho&=&2\mbox{div}\left[\sr\left(1-\theta_1(\sqrt{\rho\mu})\right)\nabla\sr+\sqrt{\rho}\theta_1(\sqrt{\rho\mu})\nabla u\right]\ \ \mbox{in $\oT$,}\label{pl5}\\
	\pt\mu&=&2\mbox{div}\left[\sm(1-\theta_1(\sqrt{\rho\mu}))\nabla\sm+\sqrt{\mu}\theta_1(\sqrt{\rho\mu})\nabla u\right]	\ \ \mbox{in $\oT$.}\label{pl6}
\end{eqnarray}
This action makes sense due to our estimate \eqref{me}.

\begin{defn}\label{def} A quadruplet $(\rho,\mu, \mathbf{f}_1, \mathbf{f}_2)$ is said to be a weak solution to system \eqref{pl5}-\eqref{pl6} coupled with \eqref{eq3} and \eqref{eq4} 
	if the following hold:
	\begin{enumerate}
		\item[(\textup{D1})] $\rho\geq 0,\ \ \mu\geq 0,\ \ \rho\ln\rho, \ \mu\ln\mu\in L^\infty(0,T; L^1(\Omega)),\ u\equiv\sr+\sm\in  L^2(0,T;W^{1,2}(\Omega)),\ \mathbf{f}_1, \mathbf{f}_2\in (L^1(\oT))^N$ with
		 \begin{eqnarray*}
			\mathbf{f}_1&=&\sr\sqrt{\rho\mu}\nabla u\  \ \mbox{on $\{\sqrt{\rho\mu}<1\}$},\\
			\mathbf{f}_2&=&\sm\sqrt{\rho\mu}\nabla u\  \ \mbox{on $\{\sqrt{\rho\mu}<1\}$};
		\end{eqnarray*}
			\item[(\textup{D2})] $a(\sqrt{\rho\mu}),\ a(\sqrt{\rho\mu}) b (\sr),\  a(\sqrt{\rho\mu}) b (\sm)
			\in L^1(0,T; W^{1,1}(\Omega))$ for each $a\in W^{2,\infty}_0(-\infty,1)$ and each $b\in C^1_{\textup{c}}(\mathbb{R}), \  \sr(1-\theta_1(\sqrt{\rho\mu}))\nabla\sr,\  \sm(1-\theta_1(\sqrt{\rho\mu}))\nabla\sm\in \left(L^1(\oT)\right)^N,$
				where 
			\begin{eqnarray*}
				\nabla\sr&=&\nabla\left[a(\sqrt{\rho\mu}) b (\sr)\right]- b (\sr)\nabla a(\sqrt{\rho\mu})\ \ \mbox{in $\{\sqrt{\rho\mu}<1-\delta\}\cap\{\sr<\frac{1}{1-\delta}\}$},\\
					\nabla\sm&=&\nabla\left[a(\sqrt{\rho\mu}) b (\sm)\right]- b (\sm)\nabla a(\sqrt{\rho\mu})\ \ \mbox{in $\{\sqrt{\rho\mu}<1-\delta\}\cap\{\sr<\frac{1}{1-\delta}\}$} 
			\end{eqnarray*}
		for each $ \delta\in(0,1)$,  each  $a\in W^{2,\infty}_0(-\infty,1)$ such that $a(s)=1$ on $[0,1-\delta]$, and each $b\in C^1_{\textup{c}}(\mathbb{R})$ with $b(s)=s$ on $[0,\frac{1}{1-\delta}]$ , while $\sr(1-\theta_1(\sqrt{\rho\mu}))\nabla\sr$ and $\sm(1-\theta_1(\sqrt{\rho\mu}))\nabla\sm$ are understood to be $\mathbf{0}$ on the set $\{\sqrt{\rho\mu}\geq 1\}$ ;
				\item[(\textup{D3})] There hold
				\begin{eqnarray*}
					\ioT\rho\pt\vp_1dxdt&=&2\ioT\left[\sr(1-\theta_1(\sqrt{\rho\mu}))\nabla\sr+\mathbf{f}_1\right]\cdot\nabla\vp_1 dxdt-\io\rho_0(x)\vp_1(x,0)dx,\\
					\ioT\mu\pt\vp_2dxdt&=&2\ioT\left[\sm(1-\theta_1(\sqrt{\rho\mu}))\nabla\sm+\mathbf{f}_2\right]\cdot\nabla\vp_2 dxdt-\io\mu_0(x)\vp_1(x,0)dx
				\end{eqnarray*}
			for all $(\vp_1, \vp_2)\in \left( C^1(\overline{\oT})\right)^2$ with $(\vp_1(x,T), \vp_2(x,T))=(0,0)$.
			\end{enumerate}
	\end{defn}
It is not surprising that we are only able to characterize $\mathbf{f}_1,\ \mathbf{f}_2$ on the set $\{\sqrt{\rho\mu}<1\}$ because system \eqref{pl5}-\eqref{pl6} is degenerate on the set $\{\sqrt{\rho\mu}\geq 1\}$. Our definition allows the possibility that $\nabla\sr$ is a pure distribution in the whole domain $\oT$. Condition (D2) means that $\nabla\sr$ can be identified with a (vector-valued) function on the set $\{\sqrt{\rho\mu}<1\}$. We refer the reader to \cite{X4,X5} for a more detailed discussion on this. The same remark also applies to $\nabla\sm$.
\begin{thm}[Main Theorem] Assume:
	\begin{enumerate}
		\item[\textup{(H1)}] $\ob$ is Lipschitz;
		\item[\textup{(H2)}]$\rho_0(x)\geq 0,\ \ \mu_0(x)\geq 0,\ \ \rho_0(x)\ln\rho_0(x), \ \mu_0(x)\ln\mu_0(x)\in L^1(\Omega)$.
	\end{enumerate}
	Then there is a weak solution $(\rho,\mu)$ to system \eqref{pl5}-\eqref{pl6} coupled with \eqref{eq3} and \eqref{eq4} in the sense of Definition \ref{def}.
\end{thm}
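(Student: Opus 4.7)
The plan is to realize the weak solution as the limit of a regularized approximation scheme driven by the entropy inequality \eqref{me}. First I would replace $(\rho_0,\mu_0)$ by smooth, strictly positive approximations $(\rho_0^\ve,\mu_0^\ve)$ satisfying the bound of (H2) uniformly in $\ve$, and regularize system \eqref{pl5}-\eqref{pl6} by substituting $\sqrt{\re+\ve}$ and $\sqrt{\me+\ve}$ wherever $\sr$ or $\sm$ appears outside the truncation $\theta_1$, so that the resulting problem is uniformly parabolic with bounded smooth coefficients. Existence of a smooth solution $(\re,\me)$ for each fixed $\ve>0$ then follows from a standard Galerkin or Schauder fixed-point argument, with $\re,\me\geq c(\ve)>0$ kept throughout via the maximum principle.

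The second step is to reproduce the entropy calculation \eqref{cd1}-\eqref{me} at the level of the approximation. The argument survives because \eqref{eli} produces the elementary quadratic bound whether the coefficient is $\sqrt{\rho\mu}$ or $\theta_1(\sqrt{\rho\mu})$: the truncation only subtracts a nonnegative piece supported on $\{\sqrt{\rho\mu}>1\}$. This yields $\ve$-uniform estimates on $\re\ln\re,\me\ln\me$ in $L^\infty(0,T;L^1(\Omega))$, on $u_\ve\equiv\sqrt{\re}+\sqrt{\me}$ in $L^2(0,T;W^{1,2}(\Omega))$, and on $(1-\sqrt{\re\me})^+|\nabla(\sqrt{\re}-\sqrt{\me})|$ in $L^2(\oT)$. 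Since $\theta_1\leq 1$ and mass conservation for \eqref{pl5}-\eqref{pl6} gives $\re,\me$ bounded in $L^\infty(0,T;L^1)$, all fluxes on the right-hand sides of \eqref{pl5}-\eqref{pl6} are bounded in $L^1(\oT)$, so that $\pt\re,\pt\me$ are bounded in $L^1(0,T;(W^{1,\infty}(\Omega))^*)$. An Aubin--Lions argument then extracts a subsequence with $\re\to\rho$ and $\me\to\mu$ strongly in $L^1(\oT)$ and a.e.

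The third step is to pass to the limit in the weak formulation (D3). Continuity and boundedness of $\theta_1$ give $\theta_1(\sqrt{\re\me})\to\theta_1(\sqrt{\rho\mu})$ a.e.\ and in every $L^q(\oT)$, while $\nabla u_\ve\rightharpoonup\nabla u$ weakly in $L^2$. Writing the cross-flux as the product of the strongly converging bounded factor $\sqrt{\re}\,\theta_1(\sqrt{\re\me})$ with the weakly converging $\nabla u_\ve$ identifies its $L^1$-weak limit as $\mathbf{f}_1$; on $\{\sqrt{\rho\mu}<1\}$, where $\theta_1(\sqrt{\rho\mu})=\sqrt{\rho\mu}$, this limit coincides with $\sr\sqrt{\rho\mu}\nabla u$, giving the representation required in (D1). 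The flux $\mathbf{f}_2$ is handled symmetrically, and (D3) follows by testing the approximate equations against admissible $(\vp_1,\vp_2)$.

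The main obstacle is verifying (D2), the renormalized identification of $\nabla\sr,\nabla\sm$ on the strictly parabolic region. On $\{\sqrt{\rho\mu}<1-\delta\}$ one has $(1-\sqrt{\rho\mu})^+\geq\delta$, so the entropy estimate supplies a genuine $L^2$ gradient for $\sqrt{\re}-\sqrt{\me}$ there, which together with the weak $L^2$ gradient of $u_\ve$ yields an $L^2$ gradient for each of $\sqrt{\re},\sqrt{\me}$ at the approximate level. To transcribe this into the distributional identity of (D2), I would exploit the product rule: for $a\in W^{2,\infty}_0(-\infty,1)$ with $a\equiv 1$ on $[0,1-\delta]$ and $b\in C^1_c(\mathbb{R})$ with $b(s)=s$ on $[0,1/(1-\delta)]$, the composition $a(\sqrt{\re\me})b(\sqrt{\re})$ equals $\sqrt{\re}$ on the intersection of the two truncation sets, so the claimed identity is a trivial product rule computation when $\re,\me$ are smooth. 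Passing to the limit in each of the (globally $L^1(W^{1,1})$-bounded) objects $a(\sqrt{\re\me})b(\sqrt{\re})$ and $b(\sqrt{\re})\nabla a(\sqrt{\re\me})$---using strong $L^1$ convergence of $\sqrt{\re},\sqrt{\me}$ and the fact that $\nabla a(\sqrt{\rho\mu})$ is supported in $\{\sqrt{\rho\mu}<1\}$, where the entropy bound is effective---then transfers the identity to the limit. The hard part is commuting the limit with the highly nonlinear composition $a(\sqrt{\re\me})b(\sqrt{\re})$ while the available gradient control degenerates as $\delta\downarrow 0$; this is expected to require a DiPerna--Lions style renormalization argument in the spirit of \cite{X4,X5}, and will be the most technical step of the proof.
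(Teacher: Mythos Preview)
Your second step claims that Aubin--Lions yields $\re\ra\rho$ and $\me\ra\mu$ strongly in $L^1(\oT)$ and a.e., and everything downstream (identification of $\theta_1(\sqrt{\re\me})$, of the cross-fluxes, and of (D2)) rests on this. But the entropy inequality does \emph{not} give you an $\ve$-uniform spatial gradient bound on $\re$ or on $\sre$. What \eqref{me} (or its truncated analogue) actually controls is $\nabla u_\ve$ in $L^2(\oT)$ and $(1-\theta_1(\sqrt{\re\me}))\nabla v_\ve$ in $L^2(\oT)$; since $\nabla\sre=\tfrac12(\nabla u_\ve+\nabla v_\ve)$, you have no control whatsoever on $\nabla\sre$ (hence on $\nabla\re$) over the set $\{\sqrt{\re\me}\geq 1\}$, which can have positive measure. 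Without a bound of the type $\{\re\}\subset L^p(0,T;X_0)$ with $X_0\hookrightarrow\hookrightarrow L^1(\Omega)$, Lemma~\ref{la} is simply not applicable to $\{\re\}$ uniformly in $\ve$. The paper flags exactly this obstruction at the end of Section~\ref{sec1}: the boundedness of $\{(\sre,\sme)\}$ in $\left(L^2(0,T;W^{1,2}(\Omega))\right)^2$ is \emph{not} known, which is why the weak-stability argument of \cite{BRYZ} cannot be invoked.

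The paper's route around this is the point of the whole Section~\ref{sec3}: instead of compactness of $\re$, one proves compactness of the composite quantities $a(\re\me)b(\re)d(\me)$ with $a\in W^{3,\infty}_0(-\infty,1)$, so that the cut-off $a$ forces every derivative to carry a factor $1-\theta_1(\sqrt{\re\me})$, which is precisely the weight the entropy estimate supplies (see \eqref{apl6}, \eqref{apl9}, \eqref{mt20}). This yields a.e.\ convergence of $\theta_1(\sqrt{\re\me})$ and of $\sre(1-\theta_1(\sqrt{\re\me}))$, hence of $\sre$ \emph{only} on $\{\zeta<1\}$ (Lemmas~\ref{l33}--\ref{l34}), while $(\re,\me)$ itself is shown to converge merely weakly in $L^1(\oT)$ via Dunford--Pettis. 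The fluxes are then identified through uniform integrability, not through the ``strongly convergent bounded factor times weakly convergent gradient'' argument you sketch (note also that $\sre\theta_1(\sqrt{\re\me})$ is not bounded). Your proposal misses this mechanism; the step you call the ``main obstacle'' (D2) is in fact downstream of, and easier than, the compactness issue you have glossed over.
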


Our plan is as follows: First, we show that for each $\ve\in(0,1)$ there is a ``classical'' weak solution to the problem
\begin{eqnarray}
		\pt\re&=&\mbox{div}\left[(1+\ve)\nabla\re+\frac{\theta_{\frac{1}{\ve}}(\sqrt{\re})\theta_1(\sqrt{\re\me})}{\sqrt{\me}}\nabla\me\right]\nonumber\\
	&=&\mbox{div}\left[2\sre(1+\ve)\nabla\sre+2\theta_{\frac{1}{\ve}}(\sqrt{\re})\theta_1(\sqrt{\re\me})\nabla\sme\right]
	\ \ \mbox{in $\oT$,}\label{appx1}\\
		\pt\me&=&\mbox{div}\left[(1+\ve)\nabla\me+\frac{\theta_{\frac{1}{\ve}}(\sqrt{\me})\theta_1(\sqrt{\re\me})}{\sqrt{\re}}\nabla\re\right]	\nonumber\\
		&=&\mbox{div}\left[2\sme\left((1+\ve)\nabla\sme+2\theta_{\frac{1}{\ve}}(\sqrt{\me})\theta_1(\sqrt{\re\me})\nabla\sre\right)\right]\ \ \mbox{in $\oT$,}\label{appx2}\\
		\nabla\re\cdot\mathbf{n}&=&\nabla\me\cdot\mathbf{n}=0\ \ \mbox{on $\Sigma_T$},\label{appx3}\\
		\re(x,0)&=&\theta_{\frac{1}{\ve}}(\rho_0(x))+\ve,\ \ 	\me(x,0)=\theta_{\frac{1}{\ve}}(\mu_0(x))+\ve\ \ \mbox{on $\Omega$.}\label{appx4}
\end{eqnarray} 
This will be done in Section \ref{sec2}. Obviously, the introduction of $\ve$ here is made so that the determinant of the coefficient matrix for the system
\begin{equation}\label{det1}
	\mbox{det}\left(\begin{array}{ll}
		1+\ve&\frac{\theta_{\frac{1}{\ve}}(\sqrt{\re})\theta_1(\sqrt{\re\me})}{\sqrt{\me}}\\
		  \frac{\theta_{\frac{1}{\ve}}(\sqrt{\me})\theta_1(\sqrt{\re\me})}{\sqrt{\re}}         &1+\ve
		\end{array}	\right)=(1+\ve)^2-\frac{\theta_{\frac{1}{\ve}}(\sqrt{\re})\theta_{\frac{1}{\ve}}(\sqrt{\me})}{\sqrt{\re\me}}\theta_1^2(\sqrt{\re\me})>0.
\end{equation}
We will explain why we must use the function $\theta_{\frac{1}{\ve}}(s)$ in our approximation scheme later. Substitute
\begin{equation}\label{ue}
	u_\ve=\sqrt{\re}+\sqrt{\me}
\end{equation}
into \eqref{appx1}-\eqref{appx2} and take $\ve\ra 0$ in the resulting system to arrive at our main theorem. Detailed analysis of this process will be provided in Section \ref{sec3}. We must point out that our situation here is different from the weak stability result in \cite{BRYZ}, where the authors have assumed that the sequence $\{(\sqrt{\re},\sqrt{\me})\}$ is bounded in $\left(L^2(0,T;W^{1,2}(\Omega))\right)^2$. Unfortunately, as pointed out in \cite{BRYZ}, this assumption is not known to hold.
\section{Existence assertion for approximate problems}\label{sec2}
In this section, 
we first state a preparatory lemma. Then we will construct a solution to \eqref{appx1}-\eqref{appx4}.

\begin{lem}[Lions-Aubin]\label{la}
	Let $X_0, X$ and $X_1$ be three Banach spaces with $X_0 \subseteq X \subseteq X_1$. Suppose that $X_0$ is compactly embedded in $X$ and that $X$ is continuously embedded in $X_1$. For $1 \leq p, q \leq \infty$, let
	\begin{equation*}
		W=\{u\in L^{p}([0,T];X_{0}): \partial_t u\in L^{q}([0,T];X_{1})\}.
	\end{equation*}
	Then:
	\begin{enumerate}
		\item[\textup{(i)}] If $p  < \infty$, then the embedding of W into $L^p([0, T]; X)$ is compact.
		\item[\textup{(ii)}] If $p  = \infty$ and $q  >  1$, then the embedding of W into $C([0, T]; X)$ is compact. 
	\end{enumerate}
\end{lem}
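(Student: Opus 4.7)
The plan is to reduce the lemma to Ehrling's interpolation inequality combined with a Fréchet--Kolmogorov time-translation estimate, which is the classical route to Aubin--Lions compactness. Let $\{u_n\}\subset W$ be a bounded sequence; the goal is to extract a subsequence converging strongly in $L^p([0,T];X)$ for (i), or in $C([0,T];X)$ for (ii).

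First I would establish Ehrling's inequality: a standard contradiction argument exploiting the compact embedding $X_0\hookrightarrow\hookrightarrow X$ and the continuous embedding $X\hookrightarrow X_1$ yields, for every $\eta>0$, a constant $C_\eta>0$ with
\[
\|v\|_X\leq \eta\|v\|_{X_0}+C_\eta\|v\|_{X_1}\qquad\text{for all }v\in X_0.
\]
Applied pointwise in $t$ and integrated, this gives, when $p<\infty$,
\[
\|u_n-u_m\|_{L^p(0,T;X)}\leq \eta\|u_n-u_m\|_{L^p(0,T;X_0)}+C_\eta\|u_n-u_m\|_{L^p(0,T;X_1)}.
\]
Since the $L^p(0,T;X_0)$ norms are uniformly bounded, the first term is of order $\eta$ times a fixed constant, so the task reduces to producing a subsequence that is Cauchy in $L^p(0,T;X_1)$ (respectively in $C([0,T];X_1)$ for (ii)).

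The $L^p(0,T;X_1)$ Cauchy property is then obtained from Fréchet--Kolmogorov. Boundedness in $L^p(0,T;X_0)$ combined with the chain $X_0\hookrightarrow\hookrightarrow X\hookrightarrow X_1$ provides the fiber-wise relative compactness in $X_1$. For equicontinuity in time, I use
\[
u_n(t+h)-u_n(t)=\int_t^{t+h}\partial_\tau u_n(\tau)\,d\tau,
\]
and apply Hölder's inequality against the $L^q(0,T;X_1)$ bound on $\partial_t u_n$ to obtain
\[
\|u_n(t+h)-u_n(t)\|_{X_1}\leq h^{\,1-1/q}\|\partial_t u_n\|_{L^q(0,T;X_1)}\quad(q>1),
\]
with the $q=1$ case handled by uniform absolute continuity of the Bochner integral. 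This modulus is uniform in $n$, which is exactly what Fréchet--Kolmogorov requires to close part (i).

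For part (ii), with $p=\infty$ and $q>1$, the same Hölder estimate shows that each $u_n:[0,T]\to X_1$ is uniformly $(1-1/q)$-Hölder continuous; Ehrling's inequality combined with the bound $\|u_n(t)\|_{X_0}\leq M$ upgrades this to equicontinuity into $X$, and pointwise pre-compactness in $X$ follows from $X_0\hookrightarrow\hookrightarrow X$, so Arzelà--Ascoli delivers a subsequence converging in $C([0,T];X)$. The main technical obstacle throughout is the vector-valued Fréchet--Kolmogorov step, where fiber-compactness in $X_1$ must be inferred through the chain of embeddings rather than directly; the assumption $q>1$ in (ii) is essential precisely because Arzelà--Ascoli requires a genuine time modulus of continuity, which would degenerate for $q=1$.
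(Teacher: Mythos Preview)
The paper does not give a proof of this lemma at all: immediately after the statement it writes ``We refer the reader to \cite{S} for the proof of this lemma,'' and then moves on. So there is nothing to compare your argument against in the paper itself; this is a quoted preparatory result.

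Your outline is the classical Ehrling-plus-time-translation route, which is essentially what one finds in Simon's paper \cite{S}. One small imprecision worth tightening: when you invoke the vector-valued Fr\'echet--Kolmogorov criterion for $p<\infty$, you appeal to ``fiber-wise relative compactness in $X_1$'' coming from boundedness in $L^p(0,T;X_0)$. For $p<\infty$ there is no pointwise control on $\|u_n(t)\|_{X_0}$, so pointwise sections need not be bounded in $X_0$. Simon's criterion instead requires that the \emph{time-averages} $\int_{t_1}^{t_2}u_n(t)\,dt$ be relatively compact in the target space, and this does follow since H\"older gives a uniform $X_0$-bound on such averages, after which $X_0\hookrightarrow\hookrightarrow X\hookrightarrow X_1$ supplies the compactness. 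With that adjustment your sketch is correct.
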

We refer the reader to \cite{S} for the proof of this lemma. More recent results in this direction can be found in \cite{CJL,DJ}. Of particular interest to us is Theorem 1 in \cite{DJ}.

The establishment of an existence assertion to \eqref{appx1}-\eqref{appx4} will largely follow the ideas in \cite{CDJ,CJ,J}. For simplicity, we will drop the subscript $\ve$ in \eqref{appx1}-\eqref{appx4}. That is, we shall write $(\rho,\mu)$ for $(\re,\me)$.
We are motivated by  \eqref{det1}  to seek a convex function $\psi(\rho,\mu)$ so that $\frac{d}{dt}\io\psi(\rho,\mu)dx\leq 0$ whenever $(\rho,\mu)$ is a classical solution to system \eqref{appx1}-\eqref{appx2} with $\rho>0,\ \mu>0$.
According to \eqref{pin}, it is enough for us to pick $\psi(\rho,\mu)$ with
\begin{eqnarray}
	\lefteqn{\left(\frac{\theta_{\frac{1}{\ve}}(\sqrt{\rho})\theta_1(\sqrt{\rho\mu})}{\sqrt{\mu}}\psi_{\rho\rho}+2(1+\ve)\psi_{\rho\mu}+\frac{\theta_{\frac{1}{\ve}}(\sqrt{\mu})\theta_1(\sqrt{\rho\mu})}{\sqrt{\rho}}\psi_{\mu\mu}\right)^2}\nonumber\\
	&\leq&4\left((1+\ve)\psi_{\rho\rho}+\frac{\theta_{\frac{1}{\ve}}(\sqrt{\mu})\theta_1(\sqrt{\rho\mu})}{\sqrt{\rho}}\psi_{\rho\mu}\right)\left(\frac{\theta_{\frac{1}{\ve}}(\sqrt{\rho})\theta_1(\sqrt{\rho\mu})}{\sqrt{\mu}}\psi_{\rho\mu}+(1+\ve)\psi_{\mu\mu}\right).
\end{eqnarray}
By the proof of \eqref{lyp2}, the above inequality is equivalent to
\begin{eqnarray}
\lefteqn{	\left(\frac{\theta_{\frac{1}{\ve}}(\sqrt{\rho})\theta_1(\sqrt{\rho\mu})}{\sqrt{\mu}}\psi_{\rho\rho}-\frac{\theta_{\frac{1}{\ve}}(\sqrt{\mu})\theta_1(\sqrt{\rho\mu})}{\sqrt{\rho}}\psi_{\mu\mu}\right)^2}\nonumber\\
&\leq& 4\left((1+\ve)^2-\frac{\theta_{\frac{1}{\ve}}(\sqrt{\rho})\theta_{\frac{1}{\ve}}(\sqrt{\mu})}{\sqrt{\rho\mu}}\theta_1^2(\sqrt{\rho\mu})\right)\mbox{det}(\nabla^2\psi).\label{ms1}
\end{eqnarray}
We easily verify that
$$\psi(\rho,\mu)=\rho\ln\rho+\mu\ln\mu$$
is a solution of \eqref{ms1}. Set
\begin{equation}\label{sdef}
S_\ve=\frac{\theta_{\frac{1}{\ve}}(\sr)}{\sr}+\frac{\theta_{\frac{1}{\ve}}(\sm)}{\sm}\leq 2.	
\end{equation}
It follows from \eqref{lyp10} that
\begin{eqnarray}
	\lefteqn{	\frac{d}{dt}\io(\rho\ln\rho+\mu\ln\mu)dx}\nonumber\\
	&=&-\io\left(\frac{1+\ve}{\rho}|\nabla\rho|^2+\frac{\left(\sm\theta_{\frac{1}{\ve}}(\sr)+\sr\theta_{\frac{1}{\ve}}(\sm)\right)\theta_1(\sqrt{\rho\mu})}{\rho\mu}\nabla\rho\cdot\nabla\mu+\frac{1+\ve}{\mu}|\nabla\mu|^2\right)dx\nonumber\\
	&=&-\io\left(\frac{1+\ve}{\rho}|\nabla\rho|^2+\frac{S_\ve\theta_1(\sqrt{\rho\mu})}{\sqrt{\rho\mu}}\nabla\rho\cdot\nabla\mu+\frac{1+\ve}{\mu}|\nabla\mu|^2\right)dx\nonumber\\
	&=&-\io\left|\frac{\sqrt{1+\ve}}{\sr}\nabla\rho+\frac{S_\ve\theta_1(\sqrt{\rho\mu})}{2\sqrt{(1+\ve)}\sqrt{\mu}}\nabla\mu\right|^2dx\nonumber\\
	&&-\io\frac{1}{4(1+\ve)\mu}\left(4(1+\ve)^2-S_\ve^2\theta_1^2(\sqrt{\rho\mu})\right)\left|\nabla\mu\right|^2dx.
\end{eqnarray}
Subsequently,
\begin{eqnarray}
	\lefteqn{\sup_{0\leq t\leq T}\io\io(\rho\ln\rho+\mu\ln\mu)dx}\nonumber\\
	&&+\frac{1}{1+\ve}\ioT\left|2(1+\ve)\nabla\sqrt{\rho}+S_\ve\theta_1(\sqrt{\rho\mu})\nabla\sqrt{\mu}\right|^2dxdt\nonumber\\
	&&+\frac{1}{1+\ve}\ioT\left[4(1+\ve)^2-S_\ve^2\theta_1^2(\sqrt{\rho\mu})\right]\left|\nabla\sqrt{\mu}\right|^2dx\nonumber\\
	&\leq& 2\io\left(\rho(x,0)\ln\rho(x,0)+\mu(x,0)\ln\mu(x,0)\right)dx\leq c.\label{ho10}
\end{eqnarray}
Here and in what follows the letter $c$ denotes a generic positive number. The last step in \eqref{ho10} is due to \eqref{appx4} and (H2).
Similarly,
\begin{eqnarray}
	\lefteqn{\frac{1+\ve}{\rho}|\nabla\rho|^2+\frac{\left(\sm\theta_{\frac{1}{\ve}}(\sr)+\sr\theta_{\frac{1}{\ve}}(\sm)\right)\theta_1(\sqrt{\rho\mu})}{\rho\mu}\nabla\rho\cdot\nabla\mu+\frac{1+\ve}{\mu}|\nabla\mu|^2}\nonumber\\
	&=&\left|\frac{\sqrt{1+\ve}}{\sm}\nabla\mu+\frac{S_\ve\theta_1(\sqrt{\rho\mu})}{2\sqrt{(1+\ve)}\sqrt{\rho}}\nabla\rho\right|^2+\frac{1}{4(1+\ve)\rho}\left(4(1+\ve)^2-S_\ve^2\theta_1^2(\sqrt{\rho\mu})\right)\left|\nabla\rho\right|^2.
\end{eqnarray}
Therefore,
\begin{eqnarray}
	\lefteqn{\sup_{0\leq t\leq T}\io(\rho\ln\rho+\mu\ln\mu)dx}\nonumber\\
	&&+\frac{1}{1+\ve}\ioT\left|2(1+\ve)\nabla\sqrt{\mu}+S_\ve\theta_1(\sqrt{\rho\mu})\nabla\sqrt{\rho}\right|^2dxdt\nonumber\\
	&&+\frac{1}{1+\ve}\ioT\left[4(1+\ve)^2-S_\ve^2\theta_1^2(\sqrt{\rho\mu})\right]\left|\nabla\sqrt{\rho}\right|^2dx\leq c.
\label{ho9}
\end{eqnarray}
Let $u,v$ be given as in \eqref{ho0}.
We can derive from \eqref{eli}, \eqref{ho9}, and \eqref{ho10} that
\begin{eqnarray}
	\lefteqn{\sup_{0\leq t\leq T}\io(\rho\ln\rho+\mu\ln\mu)dx}\nonumber\\
	&&+\frac{1}{4(1+\ve)}\ioT\left|\left(2(1+\ve)+S_\ve\theta_1(\sqrt{\rho\mu})\right)\nabla u\right|^2dxdt\nonumber\\
	&&+\frac{1}{4(1+\ve)}\ioT\left|\left(2(1+\ve)-S_\ve\theta_1(\sqrt{\rho\mu})\right)\nabla v\right|^2dxdt\nonumber\\
	&&+\frac{1}{2(1+\ve)}\ioT\left[4(1+\ve)^2-S_\ve^2\theta_1^2(\sqrt{\rho\mu})\right]\left(\left|\nabla\sqrt{\rho}\right|^2+\left|\nabla\sqrt{\mu}\right|^2\right)dx\leq c.
\label{pl8}
\end{eqnarray}

We are ready to design an approximate scheme to system \eqref{appx1}-\eqref{appx4}. As in \cite{CDJ,J}, we introduce the  entropy variables
\begin{equation}\label{ent5}
	\wo=\psi_\rho(\rho,\mu)=\ln\rho+1,\ \ \ \wt=\psi_\mu(\rho,\mu)=\ln\mu+1.
\end{equation}
Solve the above system for $\rho,\mu$ and then substitute them into \eqref{appx1}-\eqref{appx4} to derive
\begin{eqnarray}
	\pt e^{\wo-1}&=&\mbox{div}\left((1+\ve)e^{\wo-1}\nabla\wo+\theta_{\frac{1}{\ve}}(e^{\frac{1}{2}(\wo-1)})e^{\frac{1}{2}(\wt-1)}\theta_1\left(e^{\frac{1}{2}(\wo+\wt)-1}\right)\nabla\wt\right),\label{ent1}\\
	\pt e^{\wt-1}&=&\mbox{div}\left((1+\ve)e^{\wt-1}\nabla\wt+\theta_{\frac{1}{\ve}}(e^{\frac{1}{2}(\wt-1)})e^{\frac{1}{2}(\wo-1)}\theta_1\left(e^{\frac{1}{2}(\wo+\wt)-1}\right)\nabla\wo\right),\label{ent2}\\
	\nabla\wo\cdot\mathbf{n}&=&	\nabla\wt\cdot\mathbf{n}=0\ \ \mbox{on $\Sigma_T$},\label{ent3}\\
	\wo(x,0)&=&\ln\rho(x,0)+1,\ \ \wt(x,0)=\ln\mu(x,0)+1.\label{ent4}
\end{eqnarray}
Recall from \eqref{appx4} that
$$\rho(x,0)=\rho_\ve(x,0)\in \left[\ve,\frac{1}{\ve}+\ve\right],\ \ \mu(x,0)=\mu_\ve(x,0)\in \left[\ve,\frac{1}{\ve}+\ve\right].$$
Let $ T > 0 $ be given as in problem \eqref{eq1}-\eqref{eq4}. For each $ j \in \{ 1, 2, 3,... \} $ we divide the time interval $ \left[ 0, T \right] $ into $ j $ equal sub-intervals. Set
\begin{equation}
	\sigma = \frac{T}{j},\ \  t_k = k \sigma,\  k=0, 1,\cdots, j.
\end{equation}
Pick a positive integer $m$ so large that 
\begin{equation}\label{mdef}
\mbox{the inclusion $W^{m,2}(\Omega)\hookrightarrow W^{1,\infty}(\Omega)$ is continuous.}	
\end{equation}
According to the Sobolev embedding theorem, $ m>\frac{N}{2}+1$ is good enough (\cite{GT}, p.171). 
Define a finite sequence $\{\mathbf{w}^{(k)}=(\wo^{(k)}, \wt^{(k)})\}_{k=0}^j$ as follows: Let
$$\mathbf{w}^{(0)}=\left(\ln\rho(x,0)+1,\ln\mu(x,0)+1\right).$$
Whenever $\mathbf{w}^{(k-1)}\equiv(\wo^{(k-1)}, \wt^{(k-1)}),\  k=1,\cdots, j$, is given, we let $\mathbf{w}^{(k)}=(\wo^{(k)}, \wt^{(k)})$ be the solution of the following problem: Find $\mathbf{w}^{(k)}\in \left(W^{m,2}(\Omega)\right)^2$ such that
\begin{eqnarray}
\lefteqn{	\frac{1}{\sigma}\io \left(e^{\wok-1} -e^{\wo^{(k-1)}-1}\right)\phi_1 + \left(e^{\wtk-1} -e^{\wt^{(k-1)}-1}\right)\phi_2dx }\nonumber\\
&&+\io\left((1+\ve)e^{\wok-1}\nabla\wok+\theta_{\frac{1}{\ve}}(e^{\frac{1}{2}(\wok-1)})e^{\frac{1}{2}(\wtk-1)}\theta_1\left(e^{\frac{1}{2}(\wok+\wtk)-1}\right)\nabla\wtk\right)\cdot\nabla\phi_1dx\nonumber\\
&&+\io\left((1+\ve)e^{\wtk-1}\nabla\wtk+\theta_{\frac{1}{\ve}}(e^{\frac{1}{2}(\wtk-1)})e^{\frac{1}{2}(\wok-1)}\theta_1\left(e^{\frac{1}{2}(\wok+\wtk)-1}\right)\nabla\wok\right)\cdot\nabla\phi_2dx \nonumber\\
&&+\sigma\io\left(\sum_{|\alpha|=m}D^\alpha \mathbf{w}^{(k)}\cdot D^\alpha \mathbf{\phi}+\mathbf{w}^{(k)}\cdot\mathbf{\phi}\right)dx=0\ \ \mbox{for each $\mathbf{\phi}=(\phi_1,\phi_2)\in \left(W^{m,2}(\Omega)\right)^2$.}\label{td3} 
\end{eqnarray}
Here we have adopted the notations in \cite{J}. Obviously, problem \eqref{td3} represents an implicit discretization in the time variable for system \eqref{ent1}-\eqref{ent2} with a regularizing term added.  In view of \eqref{ent5} and the proof of \eqref{pl8}, we are in a position to apply
 Lemma 5 in \cite{J}. Upon doing so, we obtain that there is a solution to \eqref{td3}.

Next, we set
$$\rho_k=e^{\wok-1}, \ \ \mu_k=e^{\wtk-1}.$$
Then define the functions $	 \tilde{\rho}_j\left( x, t \right),\bar{\rho}_j \left( x, t \right), \tilde{\mu}_j \left( x, t \right), 	\bar{\mu}_j \left( x, t \right), \bar{\mathbf{w}}^{(j)}\left( x, t \right)=(\bar{w}_1^{(j)} \left( x, t \right),\bar{w}_2^{(j)} \left( x, t \right))$ on $ \Omega_T$ as follows: For each $(x,t)\in \Omega_T$
there is $k\in\{1,\cdots,j\}$ such that $t \in (t_{k-1}, t_k] $. Subsequently, set
\begin{gather}
	\tilde{\rho}_j \left( x, t \right) = \frac{ t - t_{k-1} }{\sigma} \rho_k \left( x \right) + \left( 1 - \frac{ t - t_{k-1} }{\sigma} \right) \rho_{k-1} \left( x \right),  \label{usquig} \\
	\bar{\rho}_j \left( x, t \right) = \rho_k \left( x \right), \ \bar{w}_1^{(j)} \left( x, t \right) = \wok \left( x \right),\label{ubar} \\
	\tilde{\mu}_j \left( x, t \right) = \frac{ t - t_{k-1}}{\sigma} \mu_k \left( x \right) + \left( 1 - \frac{ t - t_{k-1}}{\sigma} \right) \mu_{k-1} \left( x \right) \label{wsquig} \\
	\bar{\mu}_j \left( x, t \right) = \mu_k \left( x \right), \ \bar{w}_2^{(j)} \left( x, t \right) = \wtk \left( x \right). \label{wbar} 
\end{gather}
Using these functions, we may then write our discretized problem \eqref{td3} as
\begin{eqnarray}
\lefteqn{	\partial_t \tilde{\rho}_j -\mbox{div}\left[2(1+\ve)\sqrt{\rbj}\nabla\sqrt{\rbj} +2\theta_{\frac{1}{\ve}}(\sqrt{\bar{\rho}_j})\theta_1(\sqrt{\bar{\rho}_j\bar{\mu}_j}) \nabla\sqrt{\bar{\mu}_j}\right]}\nonumber\\
&=&  \sigma \left(\sum_{|\alpha|=m}D^{2\alpha}\bar{w}_1^{(j)}+\bar{w}_1^{(j)} \right)\;\; \mbox{ on } \; \Omega_T, \label{td1} \\
\lefteqn{\partial_t \tilde{\mu}_j -\mbox{div}\left[2(1+\ve)\sqrt{\mbj}\nabla\sqrt{\bar{\mu}_j} +2\theta_{\frac{1}{\ve}}(\sqrt{\bar{\mu}_j})\theta_1(\sqrt{\bar{\rho}_j\bar{\mu}_j}) \nabla\sqrt{\bar{\rho}_j}\right]}\nonumber\\
&=&  \sigma \left(\sum_{|\alpha|=m}D^{2\alpha}\bar{w}_2^{(j)}+\bar{w}_2^{(j)} \right)	\; \mbox{ on $\oT$.}  \label{td2} 
\end{eqnarray}
(Here we have omitted the associated boundary conditions.)
We now proceed to derive the discrete analogues of our a-priori estimates. For this purpose,
set
$$S_k=\frac{\theta_{\frac{1}{\ve}}(e^{\frac{1}{2}(\wok-1)})}{e^{\frac{1}{2}(\wok-1)}}+\frac{\theta_{\frac{1}{\ve}}(e^{\frac{1}{2}(\wtk-1)})}{e^{\frac{1}{2}(\wtk-1)}}=\frac{\theta_{\frac{1}{\ve}}(\sqrt{\rk})}{\sqrt{\rk}}+\frac{\theta_{\frac{1}{\ve}}(\sqrt{\mk})}{\sqrt{\mk}}.$$
Then we calculate that
\begin{eqnarray}
\lefteqn{	\io\left((1+\ve)e^{\wok-1}\nabla\wok+\theta_{\frac{1}{\ve}}(e^{\frac{1}{2}(\wok-1)})e^{\frac{1}{2}(\wtk-1)}\theta_1\left(e^{\frac{1}{2}(\wok+\wtk)-1}\right)\nabla\wtk\right)\cdot\nabla \wok dx}\nonumber\\
	&&+\io\left((1+\ve)e^{\wtk-1}\nabla\wtk+\theta_{\frac{1}{\ve}}(e^{\frac{1}{2}(\wtk-1)})e^{\frac{1}{2}(\wok-1)}\theta_1\left(e^{\frac{1}{2}(\wok+\wtk)-1}\right)\nabla\wok\right)\cdot\nabla \wtk dx \nonumber\\
	&=&\io\left((1+\ve)e^{\wok-1}|\nabla\wok|^2+(1+\ve)e^{\wtk-1}|\nabla\wtk|^2 \right)dx\nonumber\\
	&&+\io e^{\frac{1}{2}(\wok+\wtk)-1} S_k\theta_1\left(e^{\frac{1}{2}(\wok+\wtk)-1}\right)\nabla\wtk\cdot\nabla \wok dx\nonumber\\
	&=&\frac{1}{1+\ve}\io\left|(1+\ve)e^{\frac{\wok-1}{2}}\nabla\wok+\frac{1}{2}e^{\frac{1}{2}(\wtk-1)} S_k\theta_1\left(e^{\frac{1}{2}(\wok+\wtk)-1}\right)\nabla\wtk\right|^2dx\nonumber\\
	&&+\frac{1}{4(1+\ve)}\io e^{\wtk-1}\left[4(1+\ve)^2-S_k^2\theta_1^2\left(e^{\frac{1}{2}(\wok+\wtk)-1}\right)\right]|\nabla\wtk|^2dx\nonumber\\
	&=&\frac{1}{1+\ve}\io\left|2(1+\ve)\nabla\sqrt{\rho_k}+S_k\theta_1\left(\sqrt{\rk\mk}\right)\nabla\sqrt{\mk}\right|^2dx\nonumber\\
	&&+\frac{1}{1+\ve}\io\left[4(1+\ve)^2-S_k^2\theta_1^2\left(\sqrt{\rk\mk}\right)\right]|\nabla\sqrt{\mk}|^2dx.\label{ent7}
\end{eqnarray}
It is elementary for us to verify that
$$\left(e^{s_1-1}-e^{s_2-1}\right)s_1\geq e^{s_1-1}s_1-e^{s_2-1}s_2-\left(e^{s_1-1}-e^{s_2-1}\right).$$
Subsequently,
\begin{eqnarray}
	\lefteqn{	\frac{1}{\sigma}\io \left(e^{\wok-1} -e^{\wo^{(k-1)}-1}\right)\wok + \left(e^{\wtk-1} -e^{\wt^{(k-1)}-1}\right)\wtk dx }\nonumber\\
	&\geq&\frac{1}{\sigma}\left(\io\left(e^{\wok-1}\wok-e^{\wo^{(k-1)}-1}\wo^{(k-1)}\right)dx-\io\left(e^{\wok-1}-e^{\wo^{(k-1)}-1}\right)dx\right)\nonumber\\
	&&+\frac{1}{\sigma}\left(\io\left(e^{\wtk-1}\wtk-e^{\wt^{(k-1)}-1}\wt^{(k-1)}\right)dx-\io\left(e^{\wtk-1}-e^{\wt^{(k-1)}-1}\right)dx\right)\nonumber\\
	&=&\frac{1}{\sigma}\io\left(\rk\ln\rk-\rho_{k-1}\ln\rho_{k-1}+\mk\ln\mk-\mu_{k-1}\ln\mu_{k-1}\right)dx
\end{eqnarray}
With this and \eqref{ent7} in mind, we let $\mathbf{\phi}=\mathbf{w}^{(k)}$ in \eqref{td3} to deduce
\begin{eqnarray}
	\lefteqn{\frac{1}{\sigma}\io\left(\rk\ln\rk-\rho_{k-1}\ln\rho_{k-1}+\mk\ln\mk-\mu_{k-1}\ln\mu_{k-1}\right)dx}\nonumber\\
	&&+\frac{1}{1+\ve}\io\left|2(1+\ve)\nabla\sqrt{\rho_k}+S_k\theta_1\left(\sqrt{\rk\mk}\right)\nabla\sqrt{\mk}\right|^2dx\nonumber\\
	&&+\frac{1}{1+\ve}\io\left[4(1+\ve)^2-S_k^2\theta_1^2\left(\sqrt{\rk\mk}\right)\right]|\nabla\sqrt{\mk}|^2dx\nonumber\\
	&&+\sigma\io\left(\sum_{|\alpha|=m}|D^{\alpha}\mathbf{w}^{(k)}|^2+|\mathbf{w}^{(k)}|^2\right)dx
	\leq 0.
\end{eqnarray}
Multiply through the above inequality by $\sigma$ and sum up the resulting inequalities over $k$ to obtain
\begin{eqnarray}
\lefteqn{	\sup_{0\leq t\leq T}\io (\rbj\ln\rbj+\mbj\ln\mbj)dx}\nonumber\\
&&+\ioT\left|2(1+\ve)\nabla\sqrt{\rbj}+	\bar{S}_j\theta_1\left(\sqrt{\rbj\mbj}\right)\nabla\sqrt{\mbj}\right|^2dxdt\nonumber\\
&&+\ioT\left[4(1+\ve)^2-\bar{S}_j^2\theta_1^2\left(\sqrt{\rbj\mbj}\right)\right]|\nabla\sqrt{\mbj}|^2dxdt\nonumber\\
&&+\sigma\ioT\left(\sum_{|\alpha|=m}|D^{\alpha}\mathbf{w}^{(j)}|^2+|\mathbf{w}^{(j)}|^2\right)dxdt\nonumber\\
&\leq &c\io(\rho(x,0)\ln \rho(x,0)+\mu(x,0)\ln\mu(x,0))dx\leq c,\label{apl7}
\end{eqnarray}
where
\begin{equation}\label{sbd}
	\bar{S}_j=\frac{\theta_{\frac{1}{\ve}}(\sqrt{\rbj})}{\sqrt{\rbj}}+\frac{\theta_{\frac{1}{\ve}}(\sqrt{\mbj})}{\sqrt{\mbj}}.
\end{equation}
In fact, it is not difficult to see from \eqref{ent7} and the proof of \eqref{pl8} that
\begin{eqnarray}
	\lefteqn{\sup_{0\leq t\leq T}\io(\rbj\ln\rbj+\mbj\ln\mbj)dx}\nonumber\\
	&&+\ioT\left|\left(2(1+\ve)+\bar{S}_j\theta_1\left(\sqrt{\rbj\mbj}\right)\right)\nabla \left(\sqrt{\rbj}+\sqrt{\mbj}\right)\right|^2dxdt\nonumber\\
	&&+\ioT\left|\left(2(1+\ve)-\bar{S}_j\theta_1\left(\sqrt{\rbj\mbj}\right)\right)\nabla \left(\sqrt{\rbj}-\sqrt{\mbj}\right)\right|^2dxdt\nonumber\\
	&&+\ioT\left[4(1+\ve)^2-\bar{S}_j^2\theta^2_1\left(\sqrt{\rbj\mbj}\right)\right]\left(\left|\nabla\sqrt{\rbj}\right|^2+\left|\nabla\sqrt{\mbj}\right|^2\right)dx\nonumber\\
	&&+\sigma\ioT\left(\sum_{|\alpha|=m}|D^{\alpha}\mathbf{w}^{(j)}|^2+|\mathbf{w}^{(j)}|^2\right)dxdt\leq c.
	\label{apl8}
\end{eqnarray}
\begin{lem}\label{pre1}For each fixed $\ve$ the sequence $\{(\rbj,\mbj)\}$ is precompact in $\left(L^1(\oT)\right)^2$.
	\end{lem}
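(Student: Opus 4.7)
The plan is to invoke the Lions--Aubin lemma (Lemma \ref{la}) for the piecewise-linear time interpolants $\tilde{\rho}_j$ and $\tilde{\mu}_j$, then transfer compactness back to $\rbj$ and $\mbj$. Since $\ve$ is fixed and $\bar{S}_j\theta_1\leq 2$ by \eqref{sbd} and \eqref{tld}, the coefficient $4(1+\ve)^2-\bar{S}_j^2\theta_1^2(\sqrt{\rbj\mbj})$ in the last line of \eqref{apl8} is bounded below by $4\ve(2+\ve)>0$. Thus \eqref{apl8} provides uniform-in-$j$ bounds on $\nabla\sqrt{\rbj}$ and $\nabla\sqrt{\mbj}$ in $L^2(\oT)$, while the $\rho\ln\rho$ term gives $\rbj,\mbj\in L^\infty(0,T;L^1(\Omega))$. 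Hence $\sqrt{\rbj},\sqrt{\mbj}$ are bounded in $L^2(0,T;W^{1,2}(\Omega))\cap L^\infty(0,T;L^2(\Omega))$, and applying Cauchy--Schwarz to $\nabla\rbj=2\sqrt{\rbj}\nabla\sqrt{\rbj}$ shows that $\rbj$ and $\tilde{\rho}_j$ (and symmetrically for $\mu$) are uniformly bounded in $L^1(0,T;W^{1,1}(\Omega))$.

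Next I estimate $\pt\tilde{\rho}_j$ in $L^1(0,T;(W^{m,2}(\Omega))^*)$; by \eqref{mdef}, every $\phi\in W^{m,2}(\Omega)$ lies in $W^{1,\infty}(\Omega)$. Testing \eqref{td1} against $\phi$ and integrating by parts, the divergence piece is dominated by $\|\phi\|_{W^{1,\infty}}$ times $2(1+\ve)\|\sqrt{\rbj}\nabla\sqrt{\rbj}\|_{L^1(\Omega)}+(2/\ve)\|\nabla\sqrt{\mbj}\|_{L^1(\Omega)}$, where I use $\theta_{\frac{1}{\ve}}\leq 1/\ve$ and $\theta_1\leq 1$; Cauchy--Schwarz in time then yields a uniform bound. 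The regularizing term, $\sigma$ times an $L^2$-in-time $W^{m,2}$ norm of $\bar{\mathbf{w}}^{(j)}$, contributes at most $T^{1/2}\sigma^{1/2}\cdot\bigl(\sigma\|\bar{\mathbf{w}}^{(j)}\|_{L^2(0,T;W^{m,2})}^2\bigr)^{1/2}\|\phi\|_{W^{m,2}}$, which by \eqref{apl7} is $O(\sigma^{1/2})$. Analogous bounds hold for $\pt\tilde{\mu}_j$. Applying Lemma \ref{la}(i) with $X_0=W^{1,1}(\Omega)$, $X=L^1(\Omega)$, $X_1=(W^{m,2}(\Omega))^*$, and $p=q=1$, yields precompactness of $\{\tilde{\rho}_j\}$ and $\{\tilde{\mu}_j\}$ in $L^1(\oT)$.

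To upgrade this to $\{\rbj\}$ and $\{\mbj\}$, definitions \eqref{usquig}--\eqref{wbar} give $\rbj-\tilde{\rho}_j=-\frac{t_k-t}{\sigma}(\rho_k-\rho_{k-1})$ on $(t_{k-1},t_k]$, so writing the jump $\rho_k-\rho_{k-1}=\int_{t_{k-1}}^{t_k}\pt\tilde{\rho}_j\,dt$ in the dual norm yields $\|\rbj-\tilde{\rho}_j\|_{L^1(0,T;(W^{m,2})^*)}\leq(\sigma/2)\|\pt\tilde{\rho}_j\|_{L^1(0,T;(W^{m,2})^*)}=O(\sigma)$. Since $\{\rbj-\tilde{\rho}_j\}$ is simultaneously bounded in $L^1(0,T;W^{1,1}(\Omega))$, an Ehrling-type interpolation in the triple $W^{1,1}(\Omega)\subset L^1(\Omega)\subset(W^{m,2}(\Omega))^*$ --- valid without reflexivity, by a compactness-contradiction argument using only the compact embedding $W^{1,1}\hookrightarrow L^1$ from Rellich--Kondrachov --- forces $\rbj-\tilde{\rho}_j\to 0$ in $L^1(\oT)$, and similarly for $\mu$, giving the desired precompactness.

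The main subtlety is this last transfer step, because the textbook form of Ehrling's inequality usually assumes reflexivity of the middle or source space, which fails for $W^{1,1}$ and $L^1$; fortunately, the standard proof by contradiction depends only on the compactness of the first embedding and on the inclusion $L^1\hookrightarrow(W^{m,2})^*$ being a continuous injection, so it carries over.
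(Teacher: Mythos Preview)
Your proof is correct but follows a different route from the paper. The paper applies Theorem~1 of Dreher--J\"ungel \cite{DJ} \emph{directly} to the piecewise-constant interpolants $\rbj,\mbj$: it verifies the $L^1(0,T;W^{1,1}(\Omega))$ bound together with the discrete time-shift estimate $\sigma^{-1}\int_\sigma^T\|\rbj(\cdot,t)-\rbj(\cdot,t-\sigma)\|_{(W^{m,2})^*}\,dt\leq c$, and reads off precompactness of $\{\rbj\}$ in $L^1(\oT)$ without ever passing through $\tilde{\rho}_j$. You instead apply the classical Lions--Aubin lemma to the piecewise-linear interpolants $\tilde{\rho}_j$ and then transfer compactness back to $\rbj$ via an Ehrling interpolation, which is a perfectly valid alternative. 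In effect you have merged the paper's Lemmas~\ref{pre1} and~\ref{pre2} into one argument: the paper proves precompactness of $\{\tilde\rho_j\}$ separately (by the same Lions--Aubin step you use) and only afterwards uses \eqref{apl2} to match the two limits. Your approach has the advantage of relying only on the classical Aubin--Lions statement rather than the piecewise-constant variant in \cite{DJ}; the paper's approach is shorter and avoids the Ehrling step. One small omission: when you assert that $\tilde{\rho}_j$ is bounded in $L^1(0,T;W^{1,1}(\Omega))$ you are implicitly using that $\nabla\sqrt{\tilde{\rho}_j}$ is bounded in $L^2(\oT)$, which requires a brief convexity computation (the paper carries it out in the proof of Lemma~\ref{pre2}).
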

\begin{proof}We wish to apply Theorem 1 in \cite{DJ}. First,  we easily see from \eqref{apl8} that
	\begin{equation}\label{apl1}
		\ioT|\nabla\rbj|dxdt=2\ioT\sqrt{\rbj}\left|\nabla\sqrt{\rbj}\right|dxdt\leq 2\left(\ioT\rbj\right)^{\frac{1}{2}}\left(\ioT\left|\nabla\sqrt{\rbj}\right|^2dxdt\right)^{\frac{1}{2}}\leq c.
	\end{equation}
Similarly,
\begin{equation}
	\ioT|\nabla\mbj|dxdt\leq c.
\end{equation}
Combining the preceding two estimates with \eqref{mdef} and \eqref{td1} yields
\begin{eqnarray}
	\|\pt\rtj\|_{\left(W^{m,2}(\Omega)\right)^*}&\leq& c\left\|\sqrt{\rbj}\nabla\sqrt{\rbj}\right\|_{L^1(\Omega)}+c\left\|\sqrt{\rbj}\nabla\sqrt{\mbj}\right\|_{L^1(\Omega)}\nonumber\\
	&&+\sigma\left(\sum_{|\alpha|=m}\left\|D^\alpha\wob\right\|_{L^2(\Omega)}+\left\|\wob\right\|_{L^2(\Omega)}\right).
\end{eqnarray}
Consequently,
\begin{eqnarray}
\frac{1}{\sigma}	\int_{\sigma}^{T}\|\rbj(x,t)-\rbj(x,t-\sigma)\|_{\left(W^{m,2}(\Omega)\right)^*}dt&=&\frac{1}{\sigma}\sum_{k=2}^{j}	\int_{t_{k-1}}^{t_k}\|\rk(x)-\rho_{k-1}(x)\|_{\left(W^{m,2}(\Omega)\right)^*}dt\nonumber\\
&\leq &\int_{\sigma}^{T}\|\pt\rtj\|_{\left(W^{m,2}(\Omega)\right)^*}\leq c.
\end{eqnarray}
Here we have used \eqref{apl7}. Obviously, we have
$$W^{1,1}(\Omega)\hookrightarrow L^1(\Omega)\hookrightarrow \left(W^{m,2}(\Omega)\right)^*$$
with the first inclusion being compact. We are ready to conclude from Theorem 1 in \cite{DJ} that $\{\rbj\}$ is precompact in $L^1(\oT)$. The same argument can be applied to  $\{\mbj\}$. The proof is complete. \end{proof}
\begin{lem}\label{pre2}The sequence $\{\rtj,\mtj\}$ is precompact in $\left(L^1(\oT)\right)^2$.
	\end{lem}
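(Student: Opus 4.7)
My plan is to deduce precompactness of the piecewise-linear interpolants $\{\rtj\}, \{\mtj\}$ from the piecewise-constant precompactness already established in Lemma \ref{pre1}. The key identity, immediate from \eqref{usquig}--\eqref{ubar}, is that on each subinterval $(t_{k-1}, t_k]$,
\begin{equation*}
\rtj(x,t) - \rbj(x,t) = \left(1 - \frac{t - t_{k-1}}{\sigma}\right)\bigl(\rho_{k-1}(x) - \rho_k(x)\bigr).
\end{equation*}
Integrating in space and time and summing on $k$ yields
\begin{equation*}
\|\rtj - \rbj\|_{L^1(\oT)} = \frac{\sigma}{2}\sum_{k=1}^{j}\|\rho_k - \rho_{k-1}\|_{L^1(\Omega)} \leq c\sigma + \frac{1}{2}\int_{\sigma}^{T}\|\rbj(\cdot,t) - \rbj(\cdot, t-\sigma)\|_{L^1(\Omega)}\, dt,
\end{equation*}
where the $c\sigma$ term absorbs the $k=1$ contribution, controlled by the uniform $L^1(\Omega)$-bound on $\rho_0$ and $\rho_k$ that follows from the entropy estimate $\int_\Omega \rho_k \ln\rho_k\, dx \leq c$ contained in \eqref{apl8} (via $s \leq s\ln s + 1$ for $s\geq 0$).

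Next, by Lemma \ref{pre1} I extract a (not relabeled) subsequence along which $\rbj \to \rho$ in $L^1(\oT)$ for some $\rho \in L^1(\oT)$. Since $\rho$ is a fixed $L^1$-function, its time-translations are $L^1$-continuous, so $\int_{\sigma}^{T}\|\rho(\cdot,t) - \rho(\cdot, t-\sigma)\|_{L^1(\Omega)}\, dt \to 0$ as $\sigma\to 0$. A triangle-inequality estimate then gives
\begin{equation*}
\int_{\sigma}^{T}\|\rbj(\cdot,t) - \rbj(\cdot, t-\sigma)\|_{L^1(\Omega)}\, dt \leq 2\|\rbj - \rho\|_{L^1(\oT)} + \int_{\sigma}^{T}\|\rho(\cdot,t) - \rho(\cdot, t-\sigma)\|_{L^1(\Omega)}\, dt,
\end{equation*}
and the right-hand side tends to $0$ along the subsequence. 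Hence $\rtj - \rbj \to 0$ in $L^1(\oT)$, and so $\rtj \to \rho$ in $L^1(\oT)$. The identical argument applied to $\{\mtj\}$ and $\{\mbj\}$ finishes the proof, since every subsequence of $\{(\rtj, \mtj)\}$ thereby admits a further subsequence converging in $(L^1(\oT))^2$.

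The only point that requires real care is that the proof of Lemma \ref{pre1} only delivers a $(W^{m,2}(\Omega))^*$-bound on the temporal differences $\rho_k-\rho_{k-1}$, with no uniform $L^1(\Omega)$-control on $\sum_k\|\rho_k-\rho_{k-1}\|_{L^1(\Omega)}$; a direct estimate of $\|\rtj-\rbj\|_{L^1(\oT)}$ via the summation identity alone therefore fails. The trick is to trade this missing a priori estimate for the harmless cost of passing to a subsequence and invoking the translation continuity of the already-identified $L^1$-limit $\rho$.
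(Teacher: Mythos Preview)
Your argument is correct, but it takes a genuinely different route from the paper's own proof. The paper proceeds by applying the Lions--Aubin lemma (Lemma~\ref{la}) \emph{directly} to the linear interpolants: it first establishes the pointwise gradient estimate
\[
\left|\nabla\sqrt{\rtj}\right|\leq\sqrt{\tfrac{t-t_{k-1}}{\sigma}}\,\bigl|\nabla\sqrt{\rho_k}\bigr|+\sqrt{\tfrac{t_k-t}{\sigma}}\,\bigl|\nabla\sqrt{\rho_{k-1}}\bigr|
\quad\text{on }(t_{k-1},t_k],
\]
from which $\ioT|\nabla\sqrt{\rtj}|^2\,dxdt\leq c$ and hence $\{\rtj\}$ is bounded in $L^1(0,T;W^{1,1}(\Omega))$; coupling this with the bound on $\pt\rtj$ in $L^1(0,T;(W^{m,2}(\Omega))^*)$ that comes from \eqref{td1}, Lemma~\ref{la} gives precompactness in $L^1(\oT)$ directly.

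Your approach instead bootstraps from Lemma~\ref{pre1}: you compute $\|\rtj-\rbj\|_{L^1(\oT)}$ exactly in terms of the time-differences $\|\rho_k-\rho_{k-1}\|_{L^1(\Omega)}$, then---and this is the clever step---rather than trying to bound these differences a priori (which the available $(W^{m,2})^*$ estimate does not allow), you pass to a subsequence on which $\rbj\to\rho$ and transfer the translation estimate to the fixed limit $\rho$, where $L^1$-continuity of translations is automatic. This is more elementary in that it avoids the somewhat delicate pointwise gradient inequality for the piecewise-linear interpolant and recycles Lemma~\ref{pre1} wholesale. The price is that your argument is inherently subsequence-based, whereas the paper's direct Aubin--Lions application yields a uniform compactness criterion for the full sequence in one stroke; for the purposes of the paper (where one immediately passes to subsequences anyway, cf.\ \eqref{apl2} and the limits that follow) this costs nothing.
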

\begin{proof}
	First, we observe from \eqref{usquig} that for $t\in (t_{k-1}, t_k]$ we have
\begin{eqnarray}
	\left|\nabla\sqrt{\rtj}\right|&=&\left|\frac{\frac{t-t_{k-1}}{\sigma}\sqrt{\rk(x)}\nabla\sqrt{\rk(x)}+\frac{t_k-t}{\sigma}\sqrt{\rho_{k-1}(x)}\nabla\sqrt{\rho_{k-1}(x)}}{\sqrt{\frac{t-t_{k-1}}{\sigma}\rk(x)+\frac{t_k-t}{\sigma}\rho_{k-1}(x)}}\right|\nonumber\\
	&\leq&\sqrt{\frac{t-t_{k-1}}{\sigma}}\left|\nabla\sqrt{\rk(x)}\right|+\sqrt{\frac{t_k-t}{\sigma}}\left|\nabla\sqrt{\rho_{k-1}(x)}\right|.
\end{eqnarray}
As a result, we have
\begin{eqnarray}
	\ioT|\nabla\sqrt{\rtj}|^2dxdt&=&\sum_{k=1}^{j}\int_{t_{k-1}}^{t_k}\left(\sqrt{\frac{t-t_{k-1}}{\sigma}}\left|\nabla\sqrt{\rk(x)}\right|+\sqrt{\frac{t_k-t}{\sigma}}\left|\nabla\sqrt{\rho_{k-1}(x)}\right|\right)^2dxdt\nonumber\\
	&\leq&\sigma\sum_{k=1}^{j}\io|\nabla\sqrt{\rk(x)}|^2dx+\sigma\sum_{k=1}^{j}\io|\nabla\sqrt{\rho_{k-1}(x)}|^2dx\nonumber\\
	&=&\ioT\left|\nabla\sqrt{\rbj}\right|^2dxdt+\int_{0}^{T-\sigma}\io\left|\nabla\sqrt{\rbj}\right|^2dxdt+\sigma\io\left|\nabla\sqrt{\rho_0(x)}\right|^2dx\leq c.
\end{eqnarray}
Similarly,
\begin{equation}
	\ioT\rtj dxdt\leq c.
\end{equation}
It is easy to see from \eqref{td1} that $\{\pt\rtj\}$ is bounded in $L^1(0,T; \left(W^{m,2}(\Omega)\right)^*)$. We are in a position to use Lemma \ref{la}. Upon doing so, we obtain
\begin{equation}
	\mbox{$\{\rtj\}$ is precompact in $L^1(\oT)$.}
\end{equation}
By the same token, we can show 
\begin{equation}
	\mbox{$\{\mtj\}$ is precompact in $L^1(\oT)$.}
\end{equation}
The proof is complete.
\end{proof}
It is easy to check 
\begin{equation}
	\rtj-	\rbj=\frac{t-t_k}{\sigma}(\rk(x)-\rho_{k-1}(x))=(t-t_k)\pt\rtj\ \ \mbox{for $t\in (t_{k-1}, t_k]$}.
\end{equation}
Therefore,
\begin{eqnarray}
	\ioT\left\|\rbj-\rtj\right\|_{\left(W^{m,2}(\Omega)\right)^*} dt	&\leq&\sigma\ioT\left\|\pt\rtj\right\|_{\left(W^{m,2}(\Omega)\right)^*} dt\leq c\sigma.\label{apl2}
\end{eqnarray}
This together with Lemmas \ref{pre1} and \ref{pre2} enables us to assume
\begin{eqnarray*}
	(\rbj,\mbj)&\ra& (\rho,\mu)\ \  \mbox{strongly in $L^1(\oT)$ and a .e. on $\oT$ as $j\ra \infty$,}\label{apl3}\\
	(\rtj,\mtj)&\ra& (\rho,\mu)\ \  \mbox{strongly in $L^1(\oT)$ and a .e. on $\oT$ as $j\ra \infty$.}\label{apl4}
\end{eqnarray*}
We can let $\sigma\ra 0$	in system \eqref{td1}-\eqref{td2} to get \eqref{appx1} and \eqref{appx2}. We shall omit the details.

\section{Take $\ve\ra 0$}\label{sec3}
In this section, we offer the proof of our main theorem.

Take
$$\ve=\frac{1}{n},\ \  n=1,2, \cdots.$$
Let $(\re, \me)$ be the solution to \eqref{appx1}-\eqref{appx4} constructed in Section \ref{sec2}. Obviously, \eqref{pl8} still remains valid. That is, we have
\begin{eqnarray}
	\lefteqn{\sup_{0\leq t\leq T}\io(\re\ln\re+\me\ln\me)dx+\ioT\left|\nabla u_\ve\right|^2dxdt}\nonumber\\
	&&+\ioT\left(1+\ve-\theta_1(\sqrt{\re\me})\right)^2\left|\nabla v_\ve\right|^2dxdt\nonumber\\
	&&+\ioT\left[4(1+\ve)^2-S_\ve^2\theta^2(\sqrt{\re\me})\right]\left(\left|\nabla\sqrt{\re}\right|^2+\left|\nabla\sqrt{\me}\right|^2\right)dx\leq c,
	\label{pl88}
\end{eqnarray}
where as in \eqref{sdef} we have
\begin{equation}\label{sdef2}
	S_\ve=\frac{\theta_{\frac{1}{\ve}}(\sre)}{\sre}+\frac{\theta_{\frac{1}{\ve}}(\sme)}{\sme}\leq 2.
\end{equation}
We substitute $u_\ve=\sqrt{\re}+\sqrt{\me}$ into \eqref{appx1} and \eqref{appx2} to derive
\begin{eqnarray}
	\pt\re&=&2\mbox{div}\left[\left((1+\ve)\sre-\theta_{\frac{1}{\ve}}(\sqrt{\re})\theta_1(\sqrt{\re\me})\right)\nabla\sre+\theta_{\frac{1}{\ve}}(\sqrt{\re})\theta_1(\sqrt{\re\me})\nabla u_\ve\right]\nonumber\\
	&& \ \mbox{in $\oT$,}\label{pl55}\\
	\pt\me&=&2\mbox{div}\left[((1+\ve)\sme-\theta_{\frac{1}{\ve}}(\sme)\theta_1(\sqrt{\re\me}))\nabla\sme+\theta_{\frac{1}{\ve}}(\sqrt{\me})\theta_1(\sqrt{\re\me})\nabla u_\ve\right]\nonumber\\
	&&\ \mbox{in $\oT$.}\label{pl66}
\end{eqnarray}
Our objective is to show that we can pass to the limit in the preceding system. To this end, we pick
\begin{eqnarray}
	a\in W^{3,\infty}_0(-\infty,1).\label{mt5}
\end{eqnarray}
Then we have
\begin{equation}\label{apl6}
	|a(s)|+|a^\prime(s)|+|a^{\prime\prime}(s)|\leq 2\|a\|_{ W^{3,\infty}_0(-\infty,1)}(1-\theta_1(\sqrt{s}))\ \ \mbox{on $[0,\infty)$}.
\end{equation}
Indeed, for $s\in [0,1]$ we estimate that
\begin{eqnarray}
|a^{\prime\prime}(s)|&=&|a^{\prime\prime}(s)-a^{\prime\prime}(1)|\nonumber\\
&\leq&\|a^{\prime\prime\prime}\|_{\infty, (-\infty,1)}(1-s)\nonumber\\
&=&\|a^{\prime\prime\prime}\|_{\infty, (-\infty,1)}(1-\sqrt{s}) (\sqrt{s}+1)\nonumber\\
&\leq& 2\|a^{\prime\prime\prime}\|_{\infty, (-\infty,1)}(1-\theta_1(\sqrt{s})).
\end{eqnarray}
The above inequality is trivially true for $s>1$. Apply the same argument to $a$ and $ a^\prime$, respectively, to get \eqref{apl6}.

Take $b,\ d\in C^2_{\textup{c}}(\mathbb{R})$. That is, $b$ and $ d$ are twice continuously differentiable with compact supports. Consequently, there is a positive number $c=c(b,d)$ such that
\begin{eqnarray}
	\sqrt{s}(1+s+s^2)(|b(s)|+|b^\prime(s)|+|b^{\prime\prime}(s)|)&\leq &c\ \ \mbox{and}\nonumber\\ \sqrt{s}(1+s+s^2)(|d(s)|+|d^\prime(s)|+|d^{\prime\prime}(s)|)&\leq& c\ \ \mbox{for all $s\in [0,\infty)$.}\label{apl10}
\end{eqnarray}
Set
$$\zeta(s,\tau)=a(s\tau)b(s)d(\tau).$$
We can easily derive from \eqref{apl6} and \eqref{apl10} that
\begin{eqnarray}
\lefteqn{(\sqrt{s\tau}+s+\tau)\left(|\zeta(s,\tau)|+|\nabla\zeta(s,\tau)|+|\nabla^2\zeta(s,\tau)|\right)}	\nonumber\\
&\leq&c\left(|a(s\tau)|+|a^\prime(s\tau)|+|a^{\prime\prime}(s\tau)|\right)\leq c(1-\theta_1(\sqrt{s\tau}))\ \ \mbox{for $(s,\tau)\in [0,\infty)\times[0,\infty)$.}\label{apl9}
\end{eqnarray}

The key to our development is the following lemma.
\begin{lem}The sequence $\{\zeta(\re,\me)\}$ is precompact in $L^p(\oT)$ for each $p\geq 1$.
\end{lem}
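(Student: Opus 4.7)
The plan is to apply the Lions-Aubin lemma (Lemma \ref{la}) with the triple $W^{1,2}(\Omega)\hookrightarrow L^2(\Omega)\hookrightarrow (W^{m,2}(\Omega))^*$ (the first inclusion compact, the second continuous) to obtain precompactness of $\{\zeta(\re,\me)\}$ in $L^2(\oT)$, and then to upgrade to $L^p(\oT)$ for every $p\in[1,\infty)$ via a uniform $L^\infty$ bound combined with dominated convergence.

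The $L^\infty$ bound is immediate: since $a$ is bounded and $b, d\in C^2_{\textup{c}}(\mathbb{R})$, we have $|\zeta(s,\tau)|\le\|a\|_\infty\|b\|_\infty\|d\|_\infty$. For the $L^2$ bound on $\nabla\zeta(\re,\me)$, the chain rule gives $\nabla\zeta(\re,\me)=2\sre\zeta_s(\re,\me)\nabla\sre+2\sme\zeta_\tau(\re,\me)\nabla\sme$. From \eqref{apl9} one reads off $\re|\zeta_s(\re,\me)|\le c(1-\theta_1(\sqrt{\re\me}))$ (and the analogue for $\zeta_\tau$), which combined with the trivial pointwise bound $|\zeta_s|+|\zeta_\tau|\le c$ coming from the compact supports of $b,d$ yields $\re\zeta_s^2+\me\zeta_\tau^2\le c(1-\theta_1(\sqrt{\re\me}))$. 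Using $S_\ve\le 2$ and $\theta_1\le 1$, the bracket in \eqref{pl88} satisfies $4(1+\ve)^2-S_\ve^2\theta_1^2\ge 4(1-\theta_1(\sqrt{\re\me}))$, so \eqref{pl88} forces $\ioT(1-\theta_1(\sqrt{\re\me}))(|\nabla\sre|^2+|\nabla\sme|^2)\,dxdt\le c$; assembling the two estimates bounds $\{\nabla\zeta(\re,\me)\}$ in $L^2(\oT)$ uniformly in $\ve$.

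The heart of the argument is to control $\{\pt\zeta(\re,\me)\}$ in $L^1(0,T;(W^{m,2}(\Omega))^*)$. Writing $\pt\zeta(\re,\me)=\zeta_s\pt\re+\zeta_\tau\pt\me$ and using \eqref{pl55}--\eqref{pl66}, for $\phi\in W^{m,2}(\Omega)$ I would test against $\phi$ and integrate by parts to get
\[
\langle\pt\zeta(\re,\me),\phi\rangle = -2\io F_\ve\cdot(\zeta_s\nabla\phi+\phi\nabla\zeta_s)\,dx -2\io G_\ve\cdot(\zeta_\tau\nabla\phi+\phi\nabla\zeta_\tau)\,dx,
\]
where $F_\ve, G_\ve$ denote the flux vectors appearing in \eqref{pl55}--\eqref{pl66} and $\nabla\zeta_s=2\sre\zeta_{ss}\nabla\sre+2\sme\zeta_{s\tau}\nabla\sme$ (similarly for $\nabla\zeta_\tau$). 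Each resulting piece is controlled by combining: \eqref{apl9} (to bound $\re|\zeta_{ss}|,\sqrt{\re\me}|\zeta_{s\tau}|,\me|\zeta_{\tau\tau}|$ by $c(1-\theta_1(\sqrt{\re\me}))$); the $L^2(\oT)$ controls on $\nabla u_\ve$ and on $(1-\theta_1(\sqrt{\re\me}))^{1/2}\nabla\sre,(1-\theta_1(\sqrt{\re\me}))^{1/2}\nabla\sme$ provided by \eqref{pl88}; the truncation bounds $\theta_{1/\ve}(\sre)\le\sre$ and $\theta_{1/\ve}(\sme)\le\sme$; the fact that $\re,\me$ are effectively bounded on $\{\zeta_s\neq 0\}\cup\{\zeta_\tau\neq 0\}$ by the compact supports of $b,d$; and the continuous inclusion \eqref{mdef}, $W^{m,2}(\Omega)\hookrightarrow W^{1,\infty}(\Omega)$, which absorbs $\|\phi\|_{L^\infty}$ and $\|\nabla\phi\|_{L^\infty}$. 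The output is a bound $|\langle\pt\zeta(\re,\me),\phi\rangle|\le h_\ve(t)\|\phi\|_{W^{m,2}}$ with $\{h_\ve\}$ uniformly bounded in $L^1(0,T)$.

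Invoking Lemma \ref{la}(i) then delivers precompactness of $\{\zeta(\re,\me)\}$ in $L^2(\oT)$; extracting an a.e.-convergent subsequence and combining with the uniform $L^\infty$ bound via dominated convergence upgrades convergence to $L^p(\oT)$ for every finite $p$. The principal obstacle lies in the time-derivative step, specifically the anti-parabolic cross term $\theta_{1/\ve}(\sre)\theta_1(\sqrt{\re\me})\nabla u_\ve$ paired against $\phi\nabla\zeta_s$ (and its $\me$-analogue): since $\theta_{1/\ve}(\sre)$ may itself grow like $1/\ve$, uniformity in $\ve$ hinges on the interplay $\theta_{1/\ve}(\sre)|\zeta_s|\le\sre|\zeta_s|\le c$, marrying the truncation's linear growth with the compact support of $b$ so that the product remains bounded as $\ve\to 0$.
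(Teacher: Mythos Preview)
Your overall strategy---bound $\nabla\zeta(\re,\me)$ in $L^2(\oT)$ via \eqref{apl9} and \eqref{pl88}, bound $\pt\zeta(\re,\me)$ in a negative-order space, then invoke Lemma~\ref{la}---is exactly the paper's, and the term-by-term estimates you sketch for the fluxes are correct. (The paper takes $X_1=(W^{1,\infty}_0(\Omega))^*$ rather than $(W^{m,2}(\Omega))^*$, an immaterial difference.)

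The gap is in the justification of the chain rule for $\pt\zeta(\re,\me)$. You write $\pt\zeta(\re,\me)=\zeta_s\pt\re+\zeta_\tau\pt\me$ and then test against $\phi$, which amounts to using $\zeta_s(\re,\me)\phi$ and $\zeta_\tau(\re,\me)\phi$ as test functions in \eqref{pl55}--\eqref{pl66}. But the fluxes in those equations lie only in $(L^1(\oT))^N$ (the piece $\sre\nabla\sre$ is the product of two $L^2$ functions), so $\pt\re\in L^1(0,T;(W^{1,\infty}(\Omega))^*)$ and the admissible test functions must sit in $W^{1,\infty}$. Since $\nabla\zeta_s(\re,\me)$ contains $\nabla\sre,\nabla\sme$, the product $\zeta_s(\re,\me)\phi$ is only in $W^{1,2}$, not $W^{1,\infty}$; hence the pairing $\langle\pt\re,\zeta_s\phi\rangle$ is not defined, and your displayed identity for $\langle\pt\zeta(\re,\me),\phi\rangle$ is formal. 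The paper singles out precisely this point (see the paragraph following \eqref{mt19}) and repairs it by mollifying $\re,\me$ in space-time: for the mollified pair $(\ret,\met)$ the chain rule is legitimate, one proves $\nabla\sqrt{\ret}\to\nabla\sre$ \emph{strongly} in $L^2_{\mathrm{loc}}$ (via the pointwise inequality \eqref{mt18} and a weak--strong limit argument), and then passes $\sigma\to 0$ to recover the identity in the sense of distributions on $\oT$. This mollification step, and in particular the strong $L^2$ convergence of the mollified gradients, is the technical core of the proof and is missing from your proposal.
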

\begin{proof}
	We calculate
	\begin{eqnarray}
		\nabla\zeta(\re,\me)&=&\zeta_{s}(\re,\me)\nabla\re+	\zeta_{\tau}(\re,\me)\nabla\me\nonumber\\
		&=&2\sqrt{\re}\zeta_{s}(\re,\me)\nabla\sqrt{\re}+2\sqrt{\me}\zeta_{\tau}(\re,\me)\nabla\sqrt{\me}.\label{mt2}
	\end{eqnarray}
We can easily verify that
$$2\left[(1+\ve)^2-\theta^2_1(\sqrt{\re\me})\right]\geq \left(1+\ve-\theta_1(\sqrt{\re\me})\right)^2+(1+\ve)\left(1+\ve-\theta_1(\sqrt{\re\me})\right).$$
With these in mind, we compute from \eqref{apl9} and \eqref{pl88} that
\begin{eqnarray}
\ioT|\nabla\zeta(\re,\me)|^2dxdt&\leq&	c\ioT|\sqrt{\re}\zeta_{s}(\re,\me)\nabla\sqrt{\re}|^2dxdt+	c\ioT|\sqrt{\me}\zeta_{\tau}(\re,\me)\nabla\sqrt{\me}|^2dxdt\nonumber\\
&\leq& c\ioT\left|(1-\theta_1\sqrt{\re\me})\nabla\sqrt{\re}\right|^2dxdt+ c\ioT\left|(1-\theta_1\sqrt{\re\me})\nabla\sqrt{\me}\right|^2dxdt\nonumber\\
	&\leq &c\ioT\left[(1+\ve)^2-\theta^2_1(\sqrt{\re\me})\right]\left|\nabla\sqrt{\re}\right|^2dxdt\nonumber\\
	&&+c\ioT\left[(1+\ve)^2-\theta^2_1(\sqrt{\re\me})\right]\left|\nabla\sqrt{\me}\right|^2dxdt\leq c.\label{mt20}
\end{eqnarray}

Next we show that $\{\pt\zeta(\re,\me)\}$ is bounded in $L^1\left(0,T; \left(W^{1,\infty}_0(\Omega)\right)^*\right)$.
Formally, we compute from \eqref{appx1} and \eqref{appx2} that
\begin{eqnarray}
\lefteqn{	\pt\zeta(\re,\me)}\nonumber\\
&=&\zeta_{s}(\re,\me)\pt\re+\zeta_{\tau}(\re,\me)\pt\me\nonumber\\
	&=&\mbox{div}\left[2\zeta_{s}(\re,\me)\sre(1+\ve)\nabla\sre+2\zeta_{s}(\re,\me)\theta_{\frac{1}{\ve}}(\sre)\theta_1(\sqrt{\re\me})\nabla\sme\right]\nonumber\\
	&&-\nabla\zeta_{s}(\re,\me)\cdot\left[2\sre(1+\ve)\nabla\sre+2\theta_{\frac{1}{\ve}}(\sre)\theta_1(\sqrt{\re\me})\nabla\sme\right]\nonumber\\
		&&+\mbox{div}\left[2\zeta_{\tau}(\re,\me)\sme\left((1+\ve)\nabla\sme+\theta_1(\sqrt{\re\me})\nabla\sre\right)\right].\nonumber\\
		&&-\nabla\zeta_{\tau}(\re,\me)\cdot\left[2\sme(1+\ve)\nabla\sme+2\theta_{\frac{1}{\ve}}(\sme)\theta_1(\sqrt{\re\me})\nabla\sre\right]\nonumber\\
		&\equiv&\mbox{div}\mathbf{F}_1-f_2+\mbox{div}\mathbf{F}_3-f_4.\label{mt19}
\end{eqnarray}
We easily see from \eqref{apl9} that
$$|\mathbf{F}_1|\leq c(1-\theta_1(\sqrt{\re\me})|\nabla\sre|+c(1-\theta_1(\sqrt{\re\me})|\nabla\sme|.$$
This together with the proof of \eqref{mt20} implies
\begin{equation}
	\ioT|\mathbf{F}_1|^2dxdt\leq c.
\end{equation}
By the same token,
\begin{equation}
	\ioT|\mathbf{F}_3|^2dxdt\leq c.
\end{equation}
As for $f_2$, we have
\begin{eqnarray}
	f_2&=&4\left(\zeta_{ss}(\re,\me)\sre\nabla\sre+\zeta_{s\tau}\sme\nabla\sme\right)\cdot\left[\sre(1+\ve)\nabla\sre+\theta_{\frac{1}{\ve}}(\sre)\theta_1(\sqrt{\re\me})\nabla\sme\right]\nonumber\\
	&=&4(1+\ve)\zeta_{ss}(\re,\me)\re|\nabla\sre|^2+4\zeta_{s\tau}\sme\theta_{\frac{1}{\ve}}(\sre)\theta_1(\sqrt{\re\me})|\nabla\sme|^2\nonumber\\
	&&+4\left(\zeta_{ss}(\re,\me)\sre\theta_{\frac{1}{\ve}}(\sre)\theta_1(\sqrt{\re\me})+\sqrt{\re\me}(1+\ve)\zeta_{s\tau}\right)\nabla\sre\cdot\nabla\sme.
\end{eqnarray}
Consequently, by \eqref{apl9},
\begin{eqnarray}
	\ioT|f_2|dxdt\leq c\ioT(1-\theta_1(\sqrt{\re\me}))\left(|\nabla\sre|^2+|\nabla\sme|^2\right)dxdt\leq c.
\end{eqnarray}
Similarly, 
\begin{equation}
	\ioT|f_4|dxdt\leq c.
\end{equation}
In summary, we have shown that the right-hand side term in \eqref{mt19} is bounded in $L^2\left(0,T; \left(W^{1,2}(\Omega)\right)^*\right)+L^1(\oT)\subset L^1\left(0,T; \left(W^{1,\infty}(\Omega)\right)^*\right)$.
Unfortunately, the first step in \eqref{mt19} is not vigorous. To be precise, the terms $\zeta_{s}(\re,\me)\pt\re$ and $\zeta_{\tau}(\re,\me)\pt\me$ do not make sense. This is due to the fact that the two terms cannot be viewed as duality parings. For example, we see from \eqref{pl55} that $\pt\re\in L^1(0,T; \left(W^{1,\infty}(\Omega)\right)^*)$, while we only have that $\zeta_{\tau}(\re,\me)\in L^2(0,T; W^{1,2}(\Omega))$. We address this issue by employing mollification (\cite{EG}, p.122). To do this, pick a function $\vp\in C^\infty_0(B_1(\mathbf{0}))$ with 
$$\vp\geq 0\ \ \mbox{and $\int_{B_1(\mathbf{0})}\vp(x,t)dxdt=1$, }$$
where $B_1(\mathbf{0})$ is the unit ball in $\mathbb{R}^{N+1}$. Next let
$$\vp_\sigma(x,t)=\frac{1}{\sigma^{N+1}}\vp\left(\frac{x}{\sigma},\frac{t}{\sigma}\right), \ \ \sigma>0.$$ Define
$$\ret(x,t)=\ioT\vp_\sigma(x-y,t-s)\re(y,s)dyds,\ \ \met(x,t)=\ioT\vp_\sigma(x-y,t-s)\me(y,s)dyds.$$
It immediately follows that
\begin{equation}
	(\ret, \met)\ra(\re,\me) \ \ \mbox{strongly in $L^1(\oT)$ and a.e. on $\oT$ as $\sigma\ra 0$.}
\end{equation}
Let $Q_0$ be a subdomain of $\oT$ with $\overline{Q_0}\subset\oT$. For $\sigma<\mbox{dist}(Q_0, \partial\oT)$ and $(x,t)\in Q_0$ we calculate from \eqref{apl1} that
\begin{eqnarray}
	\pt\ret(x,t)&=&\ioT\pt\vp_\sigma(x-y,t-s)\re(y,s)ddyds\nonumber\\
	&=&-\ioT\partial_s\vp_\sigma(x-y,t-s)\re(y,s)ddyds\nonumber\\
	&=&-\ioT\nabla_y\vp_\sigma(x-y,t-s)\cdot\left[(1+\ve)\nabla\re+2\theta_{\frac{1}{\ve}}(\sqrt{\re})\theta_1(\sqrt{\re\me})\nabla\sme\right]dyds\nonumber\\
	&=&\mbox{div}\left[(1+\ve)\nabla\ret(x,t)+2\left(\theta_{\frac{1}{\ve}}(\sqrt{\re})\theta_1(\sqrt{\re\me})\nabla\sme\right)^{(\sigma)}\right]\ \ \mbox{in $Q_0$}.
\end{eqnarray}
In the same way, we obtain
\begin{equation*}
	\pt\met(x,t)=\mbox{div}\left[(1+\ve)\nabla\met(x,t)+2\left(\theta_{\frac{1}{\ve}}(\sqrt{\me})\theta_1(\sqrt{\re\me})\nabla\sre\right)^{(\sigma)}\right]\ \ \mbox{in $Q_0$}.
\end{equation*}
Now we are justified to calculate
\begin{eqnarray}
	\lefteqn{	\pt\zeta(\ret,\met)}\nonumber\\
	&=&\zeta_{s}(\ret,\met)\pt\ret+\zeta_{\tau}(\ret,\met)\pt\met\nonumber\\
	&=&\mbox{div}\left[(1+\ve)\zeta_{s}(\ret,\met)\nabla\ret+2\zeta_{s}(\ret,\met)\left(\theta_{\frac{1}{\ve}}(\sqrt{\re})\theta_1(\sqrt{\re\me})\nabla\sme\right)^{(\sigma)}\right]\nonumber\\
	&&-\nabla \zeta_{s}(\ret,\met)\cdot\left[(1+\ve)\nabla\ret+2\left(\theta_{\frac{1}{\ve}}(\sqrt{\re})\theta_1(\sqrt{\re\me})\nabla\sme\right)^{(\sigma)}\right]\nonumber\\
	&&+\mbox{div}\left[(1+\ve)\zeta_{\tau}(\ret,\met)\nabla\met+2\zeta_{\tau}(\ret,\met)\left(\theta_{\frac{1}{\ve}}(\sqrt{\me})\theta_1(\sqrt{\re\me})\nabla\sre\right)^{(\sigma)}\right]\nonumber\\
	&&-\nabla \zeta_{\tau}(\ret,\met)\cdot\left[(1+\ve)\nabla\met+2\left(\theta_{\frac{1}{\ve}}(\sqrt{\me})\theta_1(\sqrt{\re\me})\nabla\sre\right)^{(\sigma)}\right]\nonumber\\
&&\ \ \ \mbox{in $Q_0$}.\label{mt17}
\end{eqnarray}
We can easily take $\sigma\ra 0$ on the left-hand side of the above equation. We can do the same in the last four terms.
To see this, we calculate
\begin{eqnarray}
\lefteqn{\left|	\nabla\sqrt{\ret(x,t)}\right|}\nonumber\\
&=&\left|\frac{1}{2\sqrt{\ret}}\nabla\ioT\vp_\sigma(x-y, t-s)\re(y,s)dyds\right|\nonumber\\
	&=&\left|\frac{1}{2\sqrt{\ret}}\ioT\vp_\sigma(x-y, t-s)\nabla\re(y,s)dyds
\right|\nonumber\\
&=&\left|\frac{1}{\sqrt{\ret}}\ioT\vp_\sigma(x-y, t-s)\sqrt{\re(y,s)}\nabla\sqrt{\re(y,s)}dyds
\right|\nonumber\\
&\leq&\frac{1}{\sqrt{\ret}}\left(\ioT\vp_\sigma(x-y, t-s)\re(y,s)dyds\right)^{\frac{1}{2}}\nonumber\\
&&\cdot\left(\ioT\vp_\sigma(x-y, t-s)|\nabla\sqrt{\re(y,s)}|^2dyds\right)^{\frac{1}{2}}
\nonumber\\
&=&\sqrt{\left(|\nabla\sqrt{\re}|^2\right)^{(\sigma)}(x,t)}.\label{mt18}
\end{eqnarray}
(Here we have assumed that $\re$ is bounded away from 0 below. If not, replace $\re$ by $\re+\delta$ with $\delta>0$ in the above calculations and then let $\delta\ra 0$.) It follows from \eqref{mt18} that
$$\int_{Q_0}\left|	\nabla\sqrt{\ret(x,t)}\right|^2dxdt\leq\int_{Q_0}\left(|\nabla\sqrt{\re}|^2\right)^{(\sigma)}(x,t)dxdt\leq\ioT|\nabla\sqrt{\re}|^2dyds.$$
The last step is due an inequality in (\cite{EG}, p.124). Hence, we may assume
$$\nabla\sqrt{\ret}\ra\nabla\sqrt{\re}\ \ \mbox{weakly in $\left(L^2(Q_0)\right)^N$ as $\sigma\ra 0$.}$$
Recall that the norm in a Banach space is always weakly lower semi-continuous. Subsequently,
\begin{eqnarray}
	\int_{Q_0}|\nabla\sre|^2dxdt&\leq&\liminf_{\sigma\ra 0}\int_{Q_0}\left|\nabla\sqrt{\ret}\right|^2dxdt\nonumber\\
	&\leq &\liminf_{\sigma\ra 0}\int_{Q_0}\left(|\nabla\sqrt{\re}|^2\right)^{(\sigma)}dxdt=\int_{Q_0}|\nabla\sre|^2dxdt,
\end{eqnarray}
from whence follows
$$\nabla\sqrt{\ret}\ra\nabla\sqrt{\re}\ \ \mbox{strongly in $\left(L^2(Q_0)\right)^N$ as $\sigma\ra 0$.}$$
The very same argument yields
 $$\nabla\sqrt{\met}\ra\nabla\sqrt{\me}\ \ \mbox{strongly in $\left(L^2(Q_0)\right)^N$ as $\sigma\ra 0$.}$$
This is enough to justify letting $\sigma\ra 0$ in each of the last four terms in \eqref{mt17}. Indeed, it is fairly straightforward to take the limit in the two divergence terms. As for the second term on the right-hand side of \eqref{mt17}, we have
\begin{eqnarray}
	\lefteqn{\nabla \zeta_{s}(\ret,\met)\cdot\left[(1+\ve)\nabla\ret+2\left(\theta_{\frac{1}{\ve}}(\sqrt{\re})\theta_1(\sqrt{\re\me})\nabla\sme\right)^{(\sigma)}\right]}\nonumber\\
	&=&4(1+\ve)\ret\zeta_{ss}(\ret,\met)\left|\nabla\sqrt{\ret}\right|^2\nonumber\\
	&&+4(1+\ve)\sqrt{\ret\met}\zeta_{s\tau}(\ret,\met)\nabla\sqrt{\ret}\cdot\nabla\sqrt{\met}\nonumber\\
	&&+4\sqrt{\ret}\zeta_{ss}(\ret,\met)\nabla\sqrt{\ret}\cdot\left(\theta_{\frac{1}{\ve}}(\sqrt{\re})\theta_1(\sqrt{\re\me})\nabla\sme\right)^{(\sigma)}\nonumber\\
&&	+4\sqrt{\met}\zeta_{s\tau}(\ret,\met)\nabla\sqrt{\met}\cdot\left(\theta_{\frac{1}{\ve}}(\sqrt{\re})\theta_1(\sqrt{\re\me})\nabla\sme\right)^{(\sigma)}.\label{mt21}
\end{eqnarray}
It is very important to note that
$$\left(\theta_{\frac{1}{\ve}}(\sqrt{\re})\theta_1(\sqrt{\re\me})\nabla\sme\right)^{(\sigma)}\ra\theta_{\frac{1}{\ve}}(\sqrt{\re})\theta_1(\sqrt{\re\me})\nabla\sme\ \ \mbox{strongly in $\left(L^2(Q_0)\right)^N$ as $\sigma\ra 0$}$$
because the other two vectors in the last two dot products in \eqref{mt21}
are only bounded in $\left(L^2(Q_0)\right)^N$. In fact, this is why we must approximate $\sre$
by $\theta_{\frac{1}{\ve}}(\sqrt{\re})$ in the first place. The last term in \eqref{mt17} is similar to the second term. Thus, we can complete taking $\sigma\ra 0$ in the right hand side of \eqref{mt17}.

Since $Q_0$ is an arbitrary subdomain of $\oT$, we have
\begin{eqnarray}
\lefteqn{	\pt\zeta(\re,\me)}\nonumber\\
	&=&\mbox{div}\left[2\zeta_{s}(\re,\me)\sre(1+\ve)\nabla\sre+2\zeta_{s}(\re,\me)\theta_{\frac{1}{\ve}}(\sre)\theta_1(\sqrt{\re\me})\nabla\sme\right]\nonumber\\
&&-\nabla\zeta_{s}(\re,\me)\cdot\left[2\sre(1+\ve)\nabla\sre+2\theta_{\frac{1}{\ve}}(\sre)\theta_1(\sqrt{\re\me})\nabla\sme\right]\nonumber\\
&&+\mbox{div}\left[2\zeta_{\tau}(\re,\me)\sme\left((1+\ve)\nabla\sme+\theta_1(\sqrt{\re\me})\nabla\sre\right)\right].\nonumber\\
&&-\nabla\zeta_{\tau}(\re,\me)\cdot\left[2\sme(1+\ve)\nabla\sme+2\theta_{\frac{1}{\ve}}(\sme)\theta_1(\sqrt{\re\me})\nabla\sre\right]
\end{eqnarray}
in the sense of distributions in $\oT$.	Taking
	$$X_0=W^{1,2}(\Omega),\ \ X=L^2(\Omega), \ \ X_1=\left(W^{1,\infty}_0(\Omega)\right)^*,$$
	we can invoke Lemma \ref{la} to obtain 
	\begin{equation}\label{mt4}
		\mbox{$\{	b(\re)d(\me) a(\re\me) \}$ is precompact in $L^2(\oT)$.}
	\end{equation}
	This completes the proof.
\end{proof}
\begin{lem}There is a subsequence of $\{\sqrt{\re\me}\}$, still denoted by $\{\sqrt{\re\me}\}$, such that
	\begin{equation}\label{mt7}
		\mbox{$\theta_1(\sqrt{\re\me})$ converge a.e. on $\oT$.}
	\end{equation}
\end{lem}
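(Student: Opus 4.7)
The plan is to recover $\theta_1(\sqrt{\re\me})$ from the preceding lemma by approximating the map $r\mapsto 1-\theta_1(\sqrt{r})=(1-\sqrt{r})_+$ \emph{uniformly} on $[0,\infty)$ by a sequence $\{a_n\}\subset W^{3,\infty}_0(-\infty,1)$, applying the preceding lemma with standard cutoffs $b_R,d_R\in C^2_{\textup{c}}(\mathbb{R})$ in the individual variables, diagonalizing to obtain a single subsequence along which all the resulting products converge a.e., and finally removing the cutoffs by Chebyshev's inequality, which is applicable because the entropy estimate \eqref{pl88} together with the elementary inequality $s\leq s\ln s+1$ yields a uniform $L^1(\oT)$-bound for $\re$ and $\me$.

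Concretely, I will first construct $a_n\in W^{3,\infty}_0(-\infty,1)$ satisfying $a_n(r)=1-\sqrt{r}$ on $[1/n^2,1-1/n]$, smoothly interpolated from $a_n(0)=1$ on the left, and smoothly truncated to zero on $[1-\tfrac{1}{2n},\infty)$ so that $a_n$, $a_n'$, $a_n''$ all vanish at $r=1$. A direct estimate gives $\sup_{r\geq 0}|a_n(r)-(1-\sqrt{r})_+|\leq c/n$; note that $\|a_n\|_{W^{3,\infty}}$ blows up with $n$, but this is harmless since the preceding lemma is applied to each $a_n$ individually. Next, pick $b_R,d_R\in C^2_{\textup{c}}(\mathbb{R})$ with $b_R=d_R=1$ on $[0,R]$, $0\leq b_R,d_R\leq 1$, and supports in $[-1,R+1]$. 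By the preceding lemma, $\{a_n(\re\me)b_R(\re)d_R(\me)\}_\ve$ is precompact in $L^2(\oT)$ for every $(n,R)\in\mathbb{N}\times\mathbb{N}$; a standard diagonal extraction over this countable index set produces one subsequence of $\ve$ (still so denoted) along which, for every $(n,R)$, the product converges both a.e.\ on $\oT$ and in $L^2(\oT)$. Using the uniform approximation $a_n\to 1-\theta_1(\sqrt{\cdot})$ together with $|b_Rd_R|\leq 1$ and $|\oT|<\infty$, a three-epsilon argument then shows that, for each fixed $R$, the sequence $(1-\theta_1(\sqrt{\re\me}))b_R(\re)d_R(\me)$ is Cauchy in $L^1(\oT)$ as $\ve\to 0$.

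It remains to remove the cutoffs. Since \eqref{pl88} together with $s\leq s\ln s+1$ gives $\|\re\|_{L^1(\oT)}+\|\me\|_{L^1(\oT)}\leq c$ uniformly in $\ve$, Chebyshev yields $|\{\re>R\}|+|\{\me>R\}|\leq c/R$ uniformly in $\ve$. Because $0\leq 1-b_R(\re)d_R(\me)\leq\chi_{\{\re>R\}\cup\{\me>R\}}$ and $0\leq 1-\theta_1(\sqrt{\re\me})\leq 1$, we obtain $\|(1-\theta_1(\sqrt{\re\me}))(1-b_R(\re)d_R(\me))\|_{L^1(\oT)}\leq c/R$ uniformly in $\ve$. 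A triangle inequality combined with the Cauchy property of the previous paragraph then shows that $\{1-\theta_1(\sqrt{\re\me})\}_\ve$ is itself Cauchy in $L^1(\oT)$; hence it converges in $L^1(\oT)$, and a further subsequence converges a.e., giving \eqref{mt7}. The main obstacle, in my view, is the construction of the $a_n$: the singularity of $1-\sqrt{r}$ at $r=0$ forces $\|a_n\|_{W^{3,\infty}}\to\infty$, so the whole scheme hinges on the fact that the preceding lemma is applied to each $a_n$ separately and only the uniform (rather than $W^{3,\infty}$) convergence $a_n\to 1-\theta_1(\sqrt{\cdot})$ is needed to transfer the $L^p$-precompactness from $a_n(\re\me)b_R(\re)d_R(\me)$ to $(1-\theta_1(\sqrt{\re\me}))b_R(\re)d_R(\me)$.
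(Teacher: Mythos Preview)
Your argument is correct and follows the same overall scheme as the paper: invoke the preceding lemma for the truncated products $a(\re\me)b(\re)d(\me)$, strip the cutoffs $b,d$ via the uniform $L^1$ bound on $(\re,\me)$ and Chebyshev, and then pass from a countable family of $a$'s to the target by uniform approximation plus a diagonal extraction. The order of operations differs slightly (the paper first removes the cutoffs to obtain precompactness of $\{a(\re\me)\}$ in $L^1(\oT)$ for each fixed $a$, and only then diagonalizes over the approximating sequence $\{a_j\}$), but this is inessential.

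The one point worth noting is your self-identified ``main obstacle'': you approximate $r\mapsto 1-\theta_1(\sqrt{r})$, which has a square-root singularity at $r=0$ and therefore forces $\|a_n\|_{W^{3,\infty}}\to\infty$. The paper sidesteps this entirely by instead approximating the piecewise-linear (hence Lipschitz) map $s\mapsto 1-\theta_1(s)$ by $a_j\in W^{3,\infty}_0(-\infty,1)$, concluding that $\theta_1(\re\me)$ converges a.e., and then using the elementary identity
\[
\theta_1\bigl(\sqrt{\re\me}\bigr)=\sqrt{\theta_1(\re\me)}
\]
to transfer the a.e.\ convergence to $\theta_1(\sqrt{\re\me})$. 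This removes any need to worry about the behavior of the approximants near $r=0$; your construction is feasible, but the paper's detour through $\theta_1(\re\me)$ is cleaner.
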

\begin{proof} 
	We first show that
	\begin{equation}\label{acom}
		\mbox{$\{a(\re\me)\}$ is precompact in $L^1(\oT)$.}
	\end{equation}
	For each $\ell>0$ take $b$ so that
	$$b=1\ \ \mbox{on $[0,\ell]$.}$$
	With this in mind, we have
	\begin{eqnarray}
		\lefteqn{	\ioT|a(\rho_{\ve_1}\mu_{\ve_1})-a(\rho_{\ve_2}\mu_{\ve_2})|dxdt}\nonumber\\
		&\leq&\int_{\{\rho_{\ve_1}\leq\ell\}\cap\{\mu_{\ve_1}\leq\ell\}\cap\{\rho_{\ve_2}\leq\ell\}\cap\{\mu_{\ve_2}\leq\ell\}}|a(\rho_{\ve_1}\mu_{\ve_1})b(\rho_{\ve_1})b(\mu_{\ve_1})-a(\rho_{\ve_2}\mu_{\ve_2})b(\rho_{\ve_2})b(\mu_{\ve_2})|dxdt\nonumber\\
		&&+c\left|\oT\setminus \left(\{\rho_{\ve_1}\leq\ell\}\cap\{\mu_{\ve_1}\leq\ell\}\cap\{\rho_{\ve_2}\leq\ell\}\cap\{\mu_{\ve_2}\leq\ell\}\right)\right|\nonumber\\
		&\leq&\ioT|a(\rho_{\ve_1}\mu_{\ve_1})b(\rho_{\ve_1})b(\mu_{\ve_1})-a(\rho_{\ve_2}\mu_{\ve_2})b(\rho_{\ve_2})b(\mu_{\ve_2})|dxdt+\frac{c}{\ell}.
	\end{eqnarray}
The last step is due to the first term in \eqref{pl88}.
	Combining this with \eqref{mt4}  yields
	\begin{equation}\label{mt6}
		\mbox{$\{ a(\re\me) \}$ is precompact in $L^1(\oT)$.}
	\end{equation}
	Obviously, we can pick a sequence $\{a_j\}\subset W_0^{3,\infty}(-\infty,1)$ such that
	\begin{equation}\label{mtt9}
		a_j(s)\ra 1-\theta_1(s)\ \ \mbox{uniformly on $[0,\infty)$.}
	\end{equation}
	(Note that both $a_j(s)$ and $1-\theta_1(s)$ are $0$ on $[1,\infty)$). By \eqref{mt6}, for each $j$ there is a subsequence of $\{a_j(\re\me)\}$ which converge a.e. on $\oT$.
	According to the classical diagonal argument, we can select a  subsequence of $\{ \re\me \}$, still denoted by $\{ \re\me \}$, such that for each $j$ there holds
	\begin{equation}
		a_j(\re(z)\me(z))\ \ \mbox{converges for a.e. $z\in\oT$.}\nonumber
	\end{equation}
	Combing this with \eqref{mtt9} yields
	\begin{equation}\label{mtt1}
		\theta_1(\re(z)\me(z))\ \ \mbox{converges for a.e. $z\in\oT$.}
	\end{equation}
The lemma follows from the fact that
\begin{equation}
		\theta_1(\sqrt{\re(z)\me(z)})=\sqrt{	\theta_1(\re(z)\me(z))}.
\end{equation}
 	The proof is complete.
\end{proof}
\begin{lem}\label{l33}Both $\{\sqrt{\re}(1-\theta_1(\sqrt{\re\me}))\}$ and $\{\sqrt{\me}(1-\theta_1(\sqrt{\re\me}))\}$ are precompact in $L^2(\oT)$.
\end{lem}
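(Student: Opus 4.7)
By symmetry it suffices to establish precompactness of $g_\ve:=\sre(1-\theta_1(\sqrt{\re\me}))$ in $L^2(\oT)$; the argument for $\sme(1-\theta_1(\sqrt{\re\me}))$ is identical. The plan is to show that $\{g_\ve\}$ can be uniformly (in $\ve$) $L^2$-approximated by families $\zeta(\re,\me)=a(\re\me)b(\sre)d(\sme)$ of the type treated in the preceding lemma, whose precompactness then yields precompactness of $\{g_\ve\}$ by total boundedness.

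Two ingredients set the stage. First, since $0\leq 1-\theta_1(\sqrt{\re\me})\leq 1$ we have $|g_\ve|^2\leq\re$; and the estimate $\sup_\ve\sup_{0\leq t\leq T}\io\re\ln\re\,dx\leq c$ from \eqref{pl88}, together with the de la Vall\'ee-Poussin criterion, gives equi-integrability of $\{\re\}$ in $L^1(\oT)$. Second, to construct the approximants I would pick $\phi_L\in C^2_{\textup{c}}(\mathbb{R})$ with $\phi_L\equiv 1$ on $[0,L]$ and set $b_L(s)=s\phi_L(s)$, $d_{L'}(s)=\phi_{L'}(s)$, both in $C^2_{\textup{c}}(\mathbb{R})$; and construct $a_n\in W^{3,\infty}_0(-\infty,1)$ with $\|a_n-(1-\theta_1(\sqrt{\cdot}))\|_{L^\infty([0,\infty))}\to 0$, for instance $a_n(x)=(1-\sqrt{x+1/n})\chi_n(x)$ with $\chi_n\in C^\infty_{\textup{c}}(-\infty,1)$ smoothly cutting off near $x=1$.

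The approximation step decomposes
\begin{eqnarray*}
g_\ve-a_n(\re\me)b_L(\sre)d_{L'}(\sme)&=&(\sre-b_L(\sre))(1-\theta_1(\sqrt{\re\me}))d_{L'}(\sme)\\
&&+b_L(\sre)\bigl[(1-\theta_1(\sqrt{\re\me}))-a_n(\re\me)\bigr]d_{L'}(\sme)\\
&&+b_L(\sre)a_n(\re\me)(1-d_{L'}(\sme)),
\end{eqnarray*}
whose squared $L^2(\oT)$-norms are bounded respectively by $\int_{\{\sre>L\}}\re\,dxdt$, $\|a_n-(1-\theta_1(\sqrt{\cdot}))\|_\infty^2\ioT\re\,dxdt$, and $\int_{\{\sme>L'\}}\re\,dxdt$. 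Chebyshev gives $|\{\sre>L\}|\leq c/L^2$ and $|\{\sme>L'\}|\leq c/L'^2$, so equi-integrability of $\{\re\}$ drives the first and third errors to zero as $L,L'\to\infty$ uniformly in $\ve$, while the second vanishes as $n\to\infty$. Hence for any $\eta>0$ we may choose $n,L,L'$ so that $\|g_\ve-a_n(\re\me)b_L(\sre)d_{L'}(\sme)\|_{L^2(\oT)}<\eta$ for every $\ve$. Since the preceding lemma declares $\{a_n(\re\me)b_L(\sre)d_{L'}(\sme)\}_\ve$ precompact in $L^2(\oT)$ for these fixed parameters, a finite $\eta$-net for it furnishes a $2\eta$-net for $\{g_\ve\}$; as $\eta$ is arbitrary, $\{g_\ve\}$ is totally bounded in $L^2(\oT)$ and hence precompact.

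The main obstacle is the construction of $a_n$: the natural candidate $1-\theta_1(\sqrt{\cdot})$ has unbounded derivative at $0$ due to the square root and only a corner at $x=1$, so it fails to lie in $W^{3,\infty}_0(-\infty,1)$. The shift $x\mapsto x+1/n$ regularizes the origin while $\chi_n$ enforces compact support in $(-\infty,1)$; uniform convergence on $[0,\infty)$ then follows from $|\sqrt{x+1/n}-\sqrt{x}|\leq 1/\sqrt{n}$ together with the fact that both $a_n$ and $1-\theta_1(\sqrt{\cdot})$ are $O(1/n)$ on the transition region $[1-1/n,1]$.
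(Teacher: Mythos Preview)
Your overall strategy mirrors the paper's: uniformly approximate $g_\ve$ by functions of the type $\zeta(\re,\me)$ handled in the preceding lemma, control the remainder via equi-integrability of $\{\re\}$ and Chebyshev-type level-set bounds, and then pass to the limit in the cutoff $a$. But there is a genuine technical gap. The preceding lemma treats $\zeta(s,\tau)=a(s\tau)b(s)d(\tau)$ evaluated at $(\re,\me)$ with $b,d\in C^2_{\textup{c}}(\mathbb{R})$; in other words $b$ acts on $\re$, not on $\sre$. Your approximant $a_n(\re\me)b_L(\sre)d_{L'}(\sme)$ has $b_L(\sre)=\sre\,\phi_L(\sre)$, which as a function of $\re$ is $\tilde b(s)=\sqrt{s}\,\phi_L(\sqrt{s})$. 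Since $\phi_L\equiv 1$ near $0$, one computes $\tilde b''(s)\sim -\tfrac14 s^{-3/2}$ as $s\to0^+$, so $\tilde b\notin C^2_{\textup{c}}(\mathbb{R})$ and the key pointwise bound \eqref{apl10} (hence \eqref{apl9}) fails. You therefore cannot invoke the preceding lemma for these approximants, and the compactness step collapses.

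The paper sidesteps this obstruction with a small algebraic trick: it takes $b,d\in C^2_{\textup{c}}(\mathbb{R})$ with $b(s)=s$ and $d(s)=1$ on $[0,\ell]$ (so genuinely acting on $\re,\me$), notes that $a^2\in W^{3,\infty}_0(-\infty,1)$ whenever $a$ is, and uses the elementary inequality $|x-y|^2\le|x^2-y^2|$ for $x,y\ge 0$ to obtain
\[
\ioT\bigl|a(\rho_{\ve_1}\mu_{\ve_1})\sqrt{\rho_{\ve_1}}-a(\rho_{\ve_2}\mu_{\ve_2})\sqrt{\rho_{\ve_2}}\bigr|^2\,dxdt
\le\ioT\bigl|a^2(\rho_{\ve_1}\mu_{\ve_1})\rho_{\ve_1}-a^2(\rho_{\ve_2}\mu_{\ve_2})\rho_{\ve_2}\bigr|\,dxdt,
\]
reducing $L^2$-precompactness of $\{a(\re\me)\sre\}$ to $L^1$-precompactness of $\{a^2(\re\me)\re\}$, which \emph{is} of the admissible form. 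The tail on $\{\re>\ell\}\cup\{\me>\ell\}$ is then handled exactly by your equi-integrability argument, and the passage from $a$ to $1-\theta_1(\sqrt{\cdot})$ proceeds as in the previous lemma. (Incidentally, your three-term decomposition also contains a small algebraic slip---as written the terms do not sum to $g_\ve-a_n b_L d_{L'}$; the third term should be $\sre(1-\theta_1(\sqrt{\re\me}))(1-d_{L'}(\sme))$---but that is easily repaired and does not affect the estimates.)
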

\begin{proof}
	Fix $\ell>0$ as before. 
	This time we pick $b, d$ so that
	$$b(s)=s,\ \ d(s)=1\ \ \mbox{on $(0,\ell)$.}$$
	For each $a\in W^{3,\infty}_0(-\infty,1)$, we obviously have $a^2\in W^{3,\infty}_0(-\infty,1)$.
	With these in mind, we calculate from \eqref{pl8} that
	\begin{eqnarray}
		\lefteqn{	\ioT|a(\rho_{\ve_1}\mu_{\ve_1})\sqrt{\rho_{\ve_1}}-a(\rho_{\ve_2}\mu_{\ve_2})\sqrt{\rho_{\ve_2}}|^2dxdt}\nonumber\\
		&\leq&	\ioT|a^2(\rho_{\ve_1}\mu_{\ve_1})\rho_{\ve_1}-a^2(\rho_{\ve_2}\mu_{\ve_2})\rho_{\ve_2}|dxdt\nonumber\\
		&\leq&\int_{\{\rho_{\ve_1}\leq\ell\}\cap\{\mu_{\ve_1}\leq\ell\}\cap\{\rho_{\ve_2}\leq\ell\}\cap\{\mu_{\ve_2}\leq\ell\}}|a^2(\rho_{\ve_1}\mu_{\ve_1})b(\rho_{\ve_1})d(\mu_{\ve_1})-a^2(\rho_{\ve_2}\mu_{\ve_2})b(\rho_{\ve_2})d(\mu_{\ve_2})|dxdt\nonumber\\
		&&+c\int_{\{\rho_{\ve_1}>\ell\}\cup\{\mu_{\ve_1}>\ell\}\cup\{\rho_{\ve_2}>\ell\}\cup\{\mu_{\ve_2}>\ell\}}\left(\rho_{\ve_1}+\rho_{\ve_2}\right)dxdt.\label{mt10}
	\end{eqnarray}
		The first term in \eqref{pl88} implies that 
		\begin{equation}\label{mt23}
		\mbox{	$\{(\re,\me)\}$ is uniformly integrable.}
		\end{equation} Indeed, for each $E\subset\oT$ and $ s>1$, we have
	\begin{eqnarray}
		\int_E\re dxdt&=& \int_{E\cap\{\re\leq s\}}\re dxdt+\int_{E\cap\{\re> s\}}\re dxdt\nonumber\\
		&\leq& s|E|+\frac{1}{\ln s}\ioT\re\ln\re dxdt\leq  s|E|+\frac{c}{\ln s}.
	\end{eqnarray}
Thus, for each $\sigma>0$ we can find an $ s>1$ such that
$$\frac{c}{\ln s}\leq\frac{\sigma}{2}.$$
Whenever $|E|<\frac{\sigma}{2 s}$, we have
\begin{equation}\label{mt25}
\int_E\re dxdt\leq \sigma.	
\end{equation}
The same argument also works for $\{\me\}$. This completes the proof of \eqref{mt23}.

Since we have
\begin{equation}
	\left|\{\rho_{\ve_1}>\ell\}\cup\{\mu_{\ve_1}>\ell\}\cup\{\rho_{\ve_2}>\ell\}\cup\{\mu_{\ve_2}>\ell\}\right|\leq \frac{c}{\ell},
\end{equation}
for each $\sigma>0$ we can find an $\ell>0$ such that
	$$\int_{\{\rho_{\ve_1}>\ell\}\cup\{\mu_{\ve_1}>\ell\}\cup\{\rho_{\ve_2}>\ell\}\cup\{\mu_{\ve_2}>\ell\}}\re dxdt\leq \sigma\ \ \mbox{for all $\ve$.}$$
	Combining this with \eqref{mt10} gives
	\begin{equation}
		\mbox{$\{a(\re\me)\sqrt{\re}\}$ is precompact in $L^2(\oT)$.}
	\end{equation}
	Similarly, 
	\begin{equation}
		\mbox{$\{a(\re\me)\sqrt{\me}\}$ is precompact in $L^2(\oT)$.}
	\end{equation}
	The rest of the proof is similar to the one for \eqref{mt7}. We shall omit it here.
\end{proof}
In view of the Dunford-Pettis theorem and \eqref{mt7}, we may assume
\begin{eqnarray}
	\re&\ra&\rho,\ \ \me\ra\mu\ \ \mbox{weakly in $L^1(\oT)$},\label{mt24}\\
	\theta_1(\sqrt{\re\me})&\ra&\zeta \ \ \mbox{strongly in $L^2(\oT)$ and a.e. on $\oT$}.\label{mt22}
\end{eqnarray}
\begin{lem}\label{l34} We have
	\begin{eqnarray*}
		\zeta&=&\sqrt{\rho\mu}\ \ \mbox{on $\{\zeta<1\}$},\\
		\sre&\ra&\sr,\ \ \sme\ra\sm\ \ \mbox{strongly in $L^2(\{\zeta<1\})$}.
	\end{eqnarray*}
\end{lem}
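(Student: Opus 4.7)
The strategy is to exploit the strong $L^2$-compactness provided by Lemma \ref{l33} by dividing through by $1-\theta_1(\sqrt{\re\me})$, which tends to the strictly positive limit $1-\zeta$ on $\{\zeta<1\}$. Passing to further subsequences, we may assume
\[
\sre(1-\theta_1(\sqrt{\re\me}))\to g_1,\qquad \sme(1-\theta_1(\sqrt{\re\me}))\to g_2\quad\mbox{a.e.\ on $\oT$,}
\]
for some $g_1,g_2\in L^2(\oT)$, in addition to \eqref{mt22}. On $\{\zeta<1\}$, the denominator satisfies $1-\theta_1(\sqrt{\re\me})\to 1-\zeta>0$ a.e., so dividing yields
\[
\sre\to\frac{g_1}{1-\zeta},\qquad\sme\to\frac{g_2}{1-\zeta}\quad\mbox{a.e.\ on $\{\zeta<1\}$.}
\]
Moreover, the definition \eqref{tld} shows that $\theta_1(s)<1$ forces $\theta_1(s)=s$; thus at every point of $\{\zeta<1\}$ we eventually have $\sqrt{\re\me}=\theta_1(\sqrt{\re\me})$, so in fact $\sqrt{\re\me}\to\zeta$ a.e.\ on $\{\zeta<1\}$.

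To identify the limit $g_1/(1-\zeta)$ as $\sr$, observe that by \eqref{mt23} the family $\{\re\}=\{\sre^2\}$ is uniformly integrable. Combining this with a.e.\ convergence of $\sre^2$ to $(g_1/(1-\zeta))^2$ on $\{\zeta<1\}$, Vitali's convergence theorem gives $\re\to(g_1/(1-\zeta))^2$ strongly in $L^1(\{\zeta<1\})$. On the other hand, $\re\to\rho$ weakly in $L^1(\oT)$ by \eqref{mt24}; uniqueness of weak limits therefore forces $(g_1/(1-\zeta))^2=\rho$ a.e.\ on $\{\zeta<1\}$, i.e.\ $\sre\to\sr$ a.e.\ on $\{\zeta<1\}$. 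An identical argument yields $\sme\to\sm$ a.e.\ on the same set.

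To upgrade to $L^2$-convergence, note that $(\sre-\sr)^2\le 2(\re+\rho)$, so the left-hand side is uniformly integrable on $\oT$; invoking Vitali once more on $\{\zeta<1\}$ gives $\sre\to\sr$ and $\sme\to\sm$ strongly in $L^2(\{\zeta<1\})$. Finally, $\sqrt{\re\me}=\sre\sme\to\sr\,\sm=\sqrt{\rho\mu}$ in $L^1(\{\zeta<1\})$ by the Cauchy-Schwarz inequality, and comparison with the pointwise limit $\sqrt{\re\me}\to\zeta$ established in the first paragraph yields $\zeta=\sqrt{\rho\mu}$ a.e.\ on $\{\zeta<1\}$. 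The main technical subtlety is the division step: it is exactly the positivity of $1-\zeta$ that forces the conclusions to be confined to the parabolic region $\{\zeta<1\}$, and on the complementary set no analogous control on $\sre$, $\sme$ is available.
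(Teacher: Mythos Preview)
Your proof is correct and follows essentially the same approach as the paper: divide $\sre(1-\theta_1(\sqrt{\re\me}))$ by $1-\theta_1(\sqrt{\re\me})$ on the set where the latter has a positive limit, then use the weak $L^1$-convergence \eqref{mt24} together with uniform integrability to identify the pointwise limit as $\sr$. The paper compresses the identification and the upgrade to $L^2$ into the single phrase ``in view of \eqref{mt24}, it is enough to show \eqref{ms11}'', whereas you spell out the Vitali argument explicitly; otherwise the arguments coincide.
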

\begin{proof} In view of  \eqref{mt24}, it is enough for us to show
	\begin{equation}\label{ms11}
\sqrt{\re(z)}\ra\sqrt{\rho(z)},\ \ \sqrt{\me(z)}\ra\sqrt{\mu(z)}	\ \ \mbox{for a.e. $z\in\{\zeta<1\}$}.
	\end{equation}
	First we conclude from \eqref{mt22} and Lemma \ref{l33} that
	\begin{equation}
\mbox{	$\theta_1(\sqrt{\re\me})$ and $	\sqrt{\re}(1-\theta_1(\sqrt{\re\me}))$ converge strongly in $L^2(\oT)$ and a.e. on $\oT$. }
	\end{equation}
	Let $z\in\{\zeta<1\}$ be given. Then for $\ve$ sufficiently small, we have $$\sqrt{\re(z)\me(z)}=\theta_1(\sqrt{\re(z)\me(z)}))<1.$$
	Consequently,
	\begin{eqnarray*}
	\sqrt{\re(z)}=\frac{\sqrt{\re(z)}(1-\theta_1(\sqrt{\re(z)\me(z)}))}{1-\theta_1(\sqrt{\re(z)\me(z)})}.	
	\end{eqnarray*}
	This implies the first limit in \eqref{ms11}. The second one can be established in an entirely similar manner.
	\end{proof}
An easy consequence of the preceding lemmas is that for $a\in W^{1,\infty}_0(-\infty,1),\ b, d\in C_{\textup{c}}(\mathbb{R})$ we have
\begin{eqnarray*}
	a(\sqrt{\re\me})&\ra& a(\sqrt{\rho\mu})\ \ \mbox{strongly in $L^p(\oT)$ for each $p\geq 1$}, \\
b(\sre)	a(\sqrt{\re\me})&\ra&b(\sr) a(\sqrt{\rho\mu})\ \ \mbox{strongly in $L^p(\oT)$ for each $p\geq 1$}, \\
b(\sre)	d(\sme)a(\sqrt{\re\me})&\ra&b(\sr)d(\sm) a(\sqrt{\rho\mu})\ \ \mbox{strongly in $L^p(\oT)$ for each $p\geq 1$.}
\end{eqnarray*}
Moreover,
\begin{equation}\label{happy2}
	\theta_1(\sqrt{\re\me})\ra \theta_1(\sqrt{\rho\mu})\ \ \mbox{strongly in $L^p(\oT)$ for each $p\geq 1$.}
\end{equation}
\begin{lem}We have
	\begin{eqnarray}
		\theta_{\frac{1}{\ve}}(\sre)\theta_1(\sqrt{\re\me})\nabla u_\ve&\ra& \mathbf{f}_1\ \ \mbox{weakly in $\left(L^1(\oT)\right)^N$,}\label{mt26}\\
		\theta_{\frac{1}{\ve}}(\sme)\theta_1(\sqrt{\re\me})\nabla u_\ve&\ra& \mathbf{f}_2\ \ \mbox{weakly in $\left(L^1(\oT)\right)^N$,}
	\end{eqnarray}
Moreover,
\begin{equation}\label{mt27}
	\mathbf{f}_1=\sqrt{\rho}\sqrt{\rho\mu}\nabla u,\ \ 	\mathbf{f}_2=\sqrt{\mu}\sqrt{\rho\mu}\nabla u\ \ \mbox{on $\{\sqrt{\rho\mu}<1\}$.}
\end{equation}	
\end{lem}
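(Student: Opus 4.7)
The plan is to obtain $\mathbf{f}_1$ (and analogously $\mathbf{f}_2$) as a weak $L^1$ limit via the Dunford--Pettis theorem, and then to identify them on $\{\sqrt{\rho\mu} < 1\}$ by a strong-weak pairing in $L^2$. For the first step, since $\theta_{\frac{1}{\ve}}(\sre) \leq \sre$ and $\theta_1 \leq 1$, we have
$$\bigl|\theta_{\frac{1}{\ve}}(\sre)\theta_1(\sqrt{\re\me})\nabla u_\ve\bigr| \leq \sre\,|\nabla u_\ve|,$$
and Cauchy--Schwarz gives, for any measurable $E \subset \oT$,
$$\int_E \sre\,|\nabla u_\ve|\,dxdt \leq \left(\int_E \re\,dxdt\right)^{1/2}\|\nabla u_\ve\|_{L^2(\oT)}.$$
The second factor is uniformly bounded by \eqref{pl88}, while $\{\re\}$ is uniformly integrable by \eqref{mt23}. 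Hence the sequence is bounded and equi-integrable in $L^1(\oT)$, and Dunford--Pettis yields a weakly convergent subsequence with limit $\mathbf{f}_1 \in (L^1(\oT))^N$; the same reasoning delivers $\mathbf{f}_2$.

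To identify $\mathbf{f}_1$ on $\{\sqrt{\rho\mu}<1\}$, I would upgrade the scalar factor $\theta_{\frac{1}{\ve}}(\sre)\theta_1(\sqrt{\re\me})$ to strong $L^2$ convergence on this set. From \eqref{happy2} and the fact that $\theta_1(\sqrt{\rho\mu}) = \sqrt{\rho\mu}$ there, $\theta_1(\sqrt{\re\me}) \to \sqrt{\rho\mu}$ strongly in every $L^p$. By Lemma \ref{l34}, $\sre \to \sr$ strongly in $L^2(\{\sqrt{\rho\mu}<1\})$. The truncation error obeys
$$\int_{\oT} |\theta_{\frac{1}{\ve}}(\sre) - \sre|^2\,dxdt \leq \int_{\{\sre > 1/\ve\}} \re\,dxdt,$$
and the right-hand side tends to zero because $|\{\sre > 1/\ve\}| \leq \ve^2\|\re\|_{L^1(\oT)} \to 0$ combined with the uniform integrability of $\{\re\}$ from \eqref{mt23}. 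Therefore $\theta_{\frac{1}{\ve}}(\sre) \to \sr$ in $L^2(\{\sqrt{\rho\mu}<1\})$, and multiplying yields
$$\theta_{\frac{1}{\ve}}(\sre)\theta_1(\sqrt{\re\me}) \to \sr\sqrt{\rho\mu}\quad\text{strongly in } L^2(\{\sqrt{\rho\mu}<1\}).$$

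Finally, \eqref{pl88} also yields $\nabla u_\ve \rightharpoonup G$ weakly in $(L^2(\oT))^N$ along a subsequence; since Lemma \ref{l34} provides $u_\ve = \sre+\sme \to \sr+\sm = u$ strongly in $L^2(\{\sqrt{\rho\mu}<1\})$, a standard distributional argument using test functions in $C^\infty_c(\{\sqrt{\rho\mu}<1\})$ identifies $G = \nabla u$ on that set. Pairing strong $L^2$ with weak $L^2$ then shows
$$\theta_{\frac{1}{\ve}}(\sre)\theta_1(\sqrt{\re\me})\nabla u_\ve \rightharpoonup \sr\sqrt{\rho\mu}\nabla u\quad\text{weakly in } L^1(\{\sqrt{\rho\mu}<1\}),$$
which must agree with the restriction of $\mathbf{f}_1$ to this set; the argument for $\mathbf{f}_2$ is identical. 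The principal obstacle is the truncation by $\theta_{\frac{1}{\ve}}$: one must show it does not contribute in the limit, and this is precisely where the uniform integrability of $\{\re\}$ established in \eqref{mt23} is essential.
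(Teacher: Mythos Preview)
Your proof is correct and follows essentially the same route as the paper: Dunford--Pettis via the Cauchy--Schwarz bound $\int_E|\theta_{\frac{1}{\ve}}(\sre)\theta_1(\sqrt{\re\me})\nabla u_\ve|\,dxdt\le c\bigl(\int_E\re\,dxdt\bigr)^{1/2}$ together with the uniform integrability of $\{\re\}$, and then identification on $\{\sqrt{\rho\mu}<1\}$ by pairing strong $L^2$ convergence of the scalar factor with weak $L^2$ convergence of $\nabla u_\ve$. Your explicit treatment of the truncation error $\|\theta_{\frac{1}{\ve}}(\sre)-\sre\|_{L^2}^2\le\int_{\{\sre>1/\ve\}}\re\,dxdt\to 0$ and of the distributional identification $G=\nabla u$ are details the paper leaves implicit, but they are correct and do not change the structure of the argument.
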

\begin{proof}According to the Dunford-Pettis theorem, all we need to do is to show that $\{\theta_{\frac{1}{\ve}}(\sre)\theta_1(\sqrt{\re\me})\nabla u_\ve\}$ is uniformly integrable. For each measurable set $E\subset\oT$ we derive from \eqref{pl88} that
	\begin{eqnarray*}
		\int_E|\theta_{\frac{1}{\ve}}(\sre)\theta_1(\sqrt{\re\me})\nabla u_\ve|dxdt\leq \left(\int_E\re dxdt\right)^{\frac{1}{2}}\left(\int_E|\nabla \ue|^2dxdt\right)^{\frac{1}{2}}\leq c\left(\int_E\re dxdt\right)^{\frac{1}{2}}.
	\end{eqnarray*}
The rest follows from the proof of \eqref{mt25}. This yields \eqref{mt26}. By \eqref{mt22} and Lemma \ref{l34},	
\begin{eqnarray*}
	\theta_{\frac{1}{\ve}}(\sre)\theta_1(\sqrt{\re\me})&\ra&\sqrt{\rho}\sqrt{\rho\mu}\ \ \mbox{strongly in $L^2(\{\sqrt{\rho\mu}<1\} )$},\\
\nabla\ue&\ra&\nabla u\ \ \mbox{weakly in $\left(L^2(\{\sqrt{\rho\mu}<1\} )\right)^N$. }
\end{eqnarray*}
This implies the first equation in \eqref{mt27}. The same proof also works for the second sequence. The proof is complete.	
\end{proof}
\begin{lem}We have
	\begin{eqnarray}
	\sre	\left((1+\ve)-\frac{\theta_{\frac{1}{\ve}}(\sqrt{\re})}{\sre}\theta_1(\sqrt{\re\me})\right)\nabla\sre&\ra&\mathbf{g}_1\ \ \mbox{weakly in $\left(L^1(\oT)\right)^N$,}\label{mt30}\\
		\sme	\left((1+\ve)-\frac{\theta_{\frac{1}{\ve}}(\sqrt{\me})}{\sme}\theta_1(\sqrt{\re\me})\right)\nabla\sme&\ra&\mathbf{g}_2\ \ \mbox{weakly in $\left(L^1(\oT)\right)^N$.}\label{mt31}
	\end{eqnarray}
Furthermore,
\begin{eqnarray}
	\mathbf{g}_1&=&\left\{\begin{array}{ll}
		\mathbf{0}& \mbox{on $\{\sqrt{\rho\mu}\geq1\}$ },\\
		\sr(1-\sqrt{\rho\mu})\nabla\sr& \mbox{on $\{\sqrt{\rho\mu}<1\}$ },
	\end{array}\right.\\
	\mathbf{g}_2&=&\left\{\begin{array}{ll}
	\mathbf{0}& \mbox{on $\{\sqrt{\rho\mu}\geq1\}$ },\\
	\sm(1-\sqrt{\rho\mu})\nabla\sm& \mbox{on $\{\sqrt{\rho\mu}<1\}$ },
\end{array}\right.
\end{eqnarray}
where $\nabla\sr, \ \nabla\sm$ are defined as in (D2).
	\end{lem}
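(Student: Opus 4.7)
The plan is in three stages: (i) extract a weakly convergent subsequence for each sequence by establishing uniform $L^1$-integrability via Cauchy--Schwarz; (ii) identify the limit on the parabolic region $\{\sqrt{\rho\mu}<1\}$ using the truncations from \textup{(D2)}; (iii) show the limit vanishes on the degenerate region $\{\sqrt{\rho\mu}\geq 1\}$. I shall write the argument for $\mathbf{g}_1$ only; the one for $\mathbf{g}_2$ is symmetric.

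For (i), set $\alpha_\ve=\theta_{1/\ve}(\sre)/\sre\in[0,1]$, so that the coefficient in question equals $\sre\bigl((1+\ve)-\alpha_\ve\theta_1(\sqrt{\re\me})\bigr)$. The factorization
\begin{equation*}
4(1+\ve)^2-S_\ve^2\theta_1^2(\sqrt{\re\me})=\bigl(2(1+\ve)-S_\ve\theta_1\bigr)\bigl(2(1+\ve)+S_\ve\theta_1\bigr)\geq 2(1+\ve)\bigl((1+\ve)-\alpha_\ve\theta_1(\sqrt{\re\me})\bigr),
\end{equation*}
valid because $S_\ve\geq\alpha_\ve$, together with \eqref{pl88}, shows that $\bigl((1+\ve)-\alpha_\ve\theta_1(\sqrt{\re\me})\bigr)|\nabla\sre|^2$ is bounded in $L^1(\oT)$ uniformly in $\ve$. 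Thus for any measurable $E\subset\oT$, Cauchy--Schwarz yields
\begin{equation*}
\int_E\sre\bigl((1+\ve)-\alpha_\ve\theta_1\bigr)|\nabla\sre|\,dxdt\leq\biggl(\int_E\re\bigl((1+\ve)-\alpha_\ve\theta_1\bigr)dxdt\biggr)^{\!1/2}c^{1/2}.
\end{equation*}
Bounding $\re\bigl((1+\ve)-\alpha_\ve\theta_1\bigr)\leq 2\re$ and invoking the uniform integrability of $\{\re\}$ from \eqref{mt23}, the right-hand side is uniformly small in $|E|$, and Dunford--Pettis produces $\mathbf{g}_1$ as a weak-$L^1$ limit along a subsequence.

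For (ii), fix $\delta\in(0,1)$ and a large $M$, and pick $a\in W^{3,\infty}_0(-\infty,1)$ with $a\equiv 1$ on $[0,1-\delta]$ together with $b,d\in C^2_{\textup{c}}(\mathbb{R})$ satisfying $b(s)=s$ and $d(s)=1$ on $[0,M]$. By Lemma \ref{l34} and the compactness lemma preceding it, $\sre\to\sr$ and $\sme\to\sm$ in $L^2$ on $\{\sqrt{\rho\mu}<1\}$, while $a(\sqrt{\re\me})b(\sre)d(\sme)$ converges strongly in every $L^p$ with its gradient converging weakly in $L^2(\oT)$. For $\ve$ small, on the set $\{\sqrt{\rho\mu}<1-\delta\}\cap\{\sr<M\}\cap\{\sm<M\}$ we have $\theta_{1/\ve}(\sre)=\sre$, $\theta_1(\sqrt{\re\me})=\sqrt{\re\me}$, and $\nabla\sre=\nabla[a(\sqrt{\re\me})b(\sre)d(\sme)]$. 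Consequently
\begin{equation*}
\sre\bigl((1+\ve)-\alpha_\ve\theta_1(\sqrt{\re\me})\bigr)\nabla\sre=\sre\bigl(1+\ve-\sqrt{\re\me}\bigr)\,\nabla\bigl[a(\sqrt{\re\me})b(\sre)d(\sme)\bigr]
\end{equation*}
on this set, and passing to the limit by strong (scalar factor) times weak (gradient) convergence identifies the weak-$L^1$ limit as $\sr(1-\sqrt{\rho\mu})\nabla\sr$ in the sense of \textup{(D2)}. Letting $\delta\downarrow 0$ and $M\uparrow\infty$ exhausts $\{\sqrt{\rho\mu}<1\}$ up to a null set.

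The main obstacle is step (iii), since neither $\sre$ nor $\nabla\sre$ is pointwise controlled on the degenerate set. The plan is to sharpen the Cauchy--Schwarz bound from (i) by decomposing
\begin{equation*}
\re\bigl((1+\ve)-\alpha_\ve\theta_1(\sqrt{\re\me})\bigr)=\ve\re+\sre(\sre-1/\ve)^+ + \sre\theta_{1/\ve}(\sre)\bigl(1-\theta_1(\sqrt{\re\me})\bigr).
\end{equation*}
The first term is $O(\ve)$ in $L^1$. The second equals $\sre(\sre-1/\ve)$ on $\{\sre>1/\ve\}$ and vanishes elsewhere, and the $L\log L$ bound coming from \eqref{pl88} gives $\int\sre(\sre-1/\ve)^+\,dxdt\leq\int_{\{\re>1/\ve^2\}}\re\,dxdt\leq c/|\ln\ve|\to 0$ by de la Vall\'ee-Poussin. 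The third is dominated by $\re$, hence uniformly integrable, and converges pointwise to $0$ on $\{\sqrt{\rho\mu}\geq 1\}$ thanks to \eqref{happy2}, so Vitali's theorem gives $\int_E\to 0$ for every measurable $E\subset\{\sqrt{\rho\mu}\geq 1\}$. Combined with Cauchy--Schwarz from (i), this yields $\int_E\sre\bigl((1+\ve)-\alpha_\ve\theta_1(\sqrt{\re\me})\bigr)|\nabla\sre|\,dxdt\to 0$ on any such $E$, which forces $\mathbf{g}_1=\mathbf{0}$ a.e.\ on $\{\sqrt{\rho\mu}\geq 1\}$.
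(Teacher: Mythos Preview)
Your three-stage plan and step (i) match the paper's. Step (ii) is stated loosely: the identities $\theta_{\frac{1}{\ve}}(\sre)=\sre$, $\theta_1(\sqrt{\re\me})=\sqrt{\re\me}$, and $\nabla\sre=\nabla[a(\sqrt{\re\me})b(\sre)d(\sme)]$ hold on the $\ve$-dependent set where $\sre,\sme,\sqrt{\re\me}$ lie in the plateau ranges of $b,d,a$, not on the fixed limit set $\{\sqrt{\rho\mu}<1-\delta\}\cap\{\sr<M\}\cap\{\sm<M\}$ as you write. The symmetric difference has measure tending to zero by Lemma~\ref{l34}, so the argument can be repaired via uniform integrability, but you must actually carry out that repair. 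The paper sidesteps this by multiplying through by $a(\sqrt{\re\me})b'(\sre)$ to obtain the global identity \eqref{happy4}, valid on all of $\oT$, and only then passes to the limit.

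Your step (iii) takes a different route from the paper, and there is a gap. In your decomposition, the claim that the third piece $\sre\theta_{\frac{1}{\ve}}(\sre)\bigl(1-\theta_1(\sqrt{\re\me})\bigr)$ converges pointwise to $0$ on $\{\sqrt{\rho\mu}\geq 1\}$ is not justified: while $1-\theta_1(\sqrt{\re\me})\to 0$ a.e.\ there by \eqref{happy2}, the factor $\sre\theta_{\frac{1}{\ve}}(\sre)$ has no pointwise control on that set (Lemma~\ref{l34} yields a.e.\ convergence of $\sre$ only on $\{\sqrt{\rho\mu}<1\}$), so the product need not tend to zero pointwise and Vitali does not apply as stated. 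The integral does vanish, but the correct mechanism is Egoroff on the bounded factor combined with uniform integrability of the other, not Vitali on the product. The paper's argument for this part is shorter: it writes the full expression as
\[
\sqrt{(1+\ve)-\alpha_\ve\theta_1(\sqrt{\re\me})}\cdot\Bigl(\sqrt{\re\bigl((1+\ve)-\alpha_\ve\theta_1(\sqrt{\re\me})\bigr)}\,\nabla\sre\Bigr),
\]
observes that the first factor is bounded and converges a.e.\ to $\sqrt{1-\theta_1(\sqrt{\rho\mu})}$ (using \eqref{happy3}), while the second is weakly convergent in $L^1$ by the same Cauchy--Schwarz bound as in (i); one application of Egoroff then forces the weak limit to carry the factor $\sqrt{1-\theta_1(\sqrt{\rho\mu})}$, giving $\mathbf{g}_1=\mathbf{0}$ on $\{\sqrt{\rho\mu}\geq 1\}$ with no three-term split needed.
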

\begin{proof}
	Note from \eqref{sdef2} that
	\begin{eqnarray}
	\lefteqn{\left((1+\ve)-\frac{\theta_{\frac{1}{\ve}}(\sqrt{\re})}{\sre}\theta_1(\sqrt{\re\me})\right)\left|\nabla\sre\right|}\nonumber\\
	&=&	\left((1+\ve)-S_\ve\theta_1(\sqrt{\re\me})+\frac{\theta_{\frac{1}{\ve}}(\sqrt{\me})}{\sme}\theta_1(\sqrt{\re\me})\right)\left|\nabla\sre\right|	\nonumber\\
	&\leq&\left(2(1+\ve)-S_\ve\theta_1(\sqrt{\re\me})\right)\left|\nabla\sre\right|
	\end{eqnarray}
By \eqref{pl88}, the sequence is bounded in  $\left(L^2(\oT)\right)^N$. Remember that $\{\re\}$ is uniformly integrable, and so is the sequence in \eqref{mt30},
from which \eqref{mt30} follows. The same is true of the other sequence.

We can write
\begin{eqnarray}
	\lefteqn{\sre\left((1+\ve)-\frac{\theta_{\frac{1}{\ve}}(\sqrt{\re})}{\sre}\theta_1(\sqrt{\re\me})\right)\nabla\sre}\nonumber\\
	&=&\sqrt{\left((1+\ve)-\frac{\theta_{\frac{1}{\ve}}(\sqrt{\re})}{\sre}\theta_1(\sqrt{\re\me})\right)}\left(\sqrt{\re\left((1+\ve)-\frac{\theta_{\frac{1}{\ve}}(\sqrt{\re})}{\sre}\theta_1(\sqrt{\re\me})\right)}\ \ \nabla\sre\right) .\label{happy1}
\end{eqnarray}
It is not difficult for us to see that
$$\left((1+\ve)-\frac{\theta_{\frac{1}{\ve}}(\sqrt{\re})}{\sre}\theta_1(\sqrt{\re\me})\right)\ra 1-\theta_1(\sqrt{\rho\mu})\ \ \mbox{a.e on $\Omega_T$.}$$
Indeed, in view of \eqref{happy2}, it is enough for us to show
\begin{equation}\label{happy3}
	\frac{\theta_{\frac{1}{\ve}}(\sqrt{\re})}{\sre}\ra 1\ \ \mbox{a.e. on $\oT$.}
\end{equation}
To this end, we estimate
$$\ioT\left(\frac{\sre}{\theta_{\frac{1}{\ve}}(\sqrt{\re})}-1\right)dxdt=\int_{\{\sre\geq\frac{1}{\ve}\}}\left(\ve\sre-1\right)dxdt\leq \ve\int_{\{\sre\geq\frac{1}{\ve}\}}\sre dxdt\leq c\ve.$$
Now the first factor in \eqref{happy1} converges weak$^*$ in $L^\infty(\oT)$, strongly in $L^p(\oT)$ for each $p>1$, and a.e. in $\oT$, while the second factor converges weakly in $\left(L^1(\oT)\right)^N$. By applying Egoroff's theorem (\cite{EG}, p.16) appropriately, we can still take $\ve\ra 0$ in the product and the limit has the factor $1-\theta_1(\sqrt{\rho\mu})$. Therefore, we must have
$$\mathbf{g}_1=\mathbf{0}\ \ \mbox{on $\{\sqrt{\rho\mu}\geq 1\}$}.$$
The same argument goes for $\mathbf{g}_2$.

For each $a\in W^{2,\infty}_0(-\infty,0),\ b\in C_{\textup{c}}^1(\mathbb{R})$ we easily see that
\begin{equation}
	\mbox{$a(\sqrt{\re\me}),\ a(\sqrt{\re\me}) b (\sre), a(\sqrt{\re\me}) b (\sme) $ are all bounded in $L^1(0, T; W^{1,1}(\Omega))$.}
\end{equation}
Indeed, there hold
\begin{eqnarray*}
	|a(\sqrt{\re\me})|&=&|a(\sqrt{\re\me})-a(1)|\leq \|a^\prime\|_{\infty, (-\infty,1)}(1-\theta_1(\sqrt{\re\me})),\\
	|a^\prime(\sqrt{\re\me})|&=&|a^\prime(\sqrt{\re\me})-a^\prime(1)|\leq \|a^{\prime\prime}\|_{\infty, (-\infty,1)}(1-\theta_1(\sqrt{\re\me})).
\end{eqnarray*}
Subsequently,
\begin{eqnarray*}
	|\nabla a(\sqrt{\re\me})|&\leq&c\sme(1-\theta_1(\sqrt{\re\me}))|\nabla\sre|+c\sre(1-\theta_1(\sqrt{\re\me}))|\nabla\sme|,\\
	\sre|\nabla\left( a(\sqrt{\re\me}) b (\sre)\right)|&\leq&c	|\nabla a(\sqrt{\re\me})|+ c(1-\theta_1(\sqrt{\re\me}))|\nabla\sre|,\nonumber\\
\sme|\nabla\left( a(\sqrt{\re\me}) b (\sme)\right)|	&\leq&c	|\nabla a(\sqrt{\re\me})|+ c(1-\theta_1(\sqrt{\re\me}))|\nabla\sme|.
\end{eqnarray*}
Note that all the terms on the right-hand sides are uniformly integrable. This together with \eqref{acom} and Lemmas \ref{l33} and \ref{l34} enables us to assume
\begin{eqnarray*}
	a(\sqrt{\re\me})&\ra &a(\sqrt{\rho\mu})\ \ \mbox{weakly in $L^1(0, T; W^{1,1}(\Omega))$},\\ \sre a(\sqrt{\re\me}) b (\sre)&\ra&\sr\ a(\sqrt{\rho\mu}) b (\sr)\ \ \mbox{weakly in $L^1(0, T; W^{1,1}(\Omega))$},\\
	\sme a(\sqrt{\re\me}) b (\sme)&\ra&\sm\ a(\sqrt{\rho\mu}) b (\sm)\ \ \mbox{weakly in $L^1(0, T; W^{1,1}(\Omega))$}.
\end{eqnarray*}
We easily see
\begin{eqnarray}
\lefteqn{	a(\sqrt{\re\me})b^\prime(\sre)	\sre\left((1+\ve)-\frac{\theta_{\frac{1}{\ve}}(\sqrt{\re})}{\sre}\theta_1(\sqrt{\re\me})\right)\nabla\sre}\nonumber\\
&=&\left((1+\ve)-\frac{\theta_{\frac{1}{\ve}}(\sqrt{\re})}{\sre}\theta_1(\sqrt{\re\me})\right)\sre\left(\nabla\left(a(\sqrt{\re\me}) b (\sre)\right)- b (\sre)\nabla a(\sqrt{\re\me})\right).\label{happy4}
\end{eqnarray}
The situation here is exactly the same as the one on the right-hand of \eqref{happy1}. Therefore, with a suitable application of Egoroff's theorem, we can pass to the limit in \eqref{happy4}
to arrive at 
\begin{eqnarray*}
	a(\sqrt{\rho\mu})b^\prime(\sr)\mathbf{g}_1&=&\sr(1-\theta_1(\sqrt{\rho\mu}))\left(\nabla\left(a(\sqrt{\rho\mu})b(\sr)-b(\sr)\nabla	a(\sqrt{\rho\mu})\right)\right).
\end{eqnarray*}
If $a, b$ are given as in (D2), then we have the desired  result. 
Condition \eqref{mt31} can be derived in a similar manner. The proof is complete.
\end{proof}

Finally, we can pass to the limit in system \eqref{pl55}-\eqref{pl66}. The proof of the main theorem is complete.
		

\begin{thebibliography}{9}
				\bibitem{AB} A. Alsenafi and A.B.T. Barbaro, A convection-diffusion model for gang territoriality, {\it Phys. A}, {\bf 510}(2018), 765-786.
	\bibitem{BRYZ}
	A. B. T. Barbaro, N. Rodriguez, H. Yoldaş, and N. Zamponi. Analysis of a
	cross-diffusion model for rival gangs interaction in a city,  {\it Commun. Math. Sci.}, {\bf 19}(2021), 2139--2175.
\bibitem{CDJ}
X. Chen, E. Daus, A. J\"{u}ngel, Global existence analysis of cross-diffusion population systems for 
multiple species,
{\it Arch. Ration. Mech. Ana1.}, {\bf 227}(2018), 715-747.
\bibitem{CJ} X. Chen and A. J\"{u}ngel, When do cross-diffusion systems have an entropy structure, {\it J. Differential Equations}, {\bf 278 }(2021), 60-72.
\bibitem{CJL} X. Chen, A. J\"{u}ngel, and J-G Liu, A note on Aubin-Lions-Dubinskiĭ lemmas, {\it Acta Applicandae Mathematicae}, {\bf 133}(2014), 33–43.
\bibitem{DJ}
 M. Dreher and A. J\"{u}ngel. Compact families of piecewise constant functions in $L^p(0, T; B)$, {\it Nonlin. Anal.},  {\bf 75} (2012), 3072-3077.
	\bibitem{DSY}
	R. Ducasse, F. Santambrogio, and H. Yoldaş,
	A cross-diffusion system obtained via (convex) relaxation in the
	JKO scheme, {\it Calc. Var. }, {\bf 62} (2023), 29-70.
	\bibitem{EG} L.C.~Evans and R.F.~Gariepy, {\it Measure Theory and Fine Properties of Functions}, CRC Press, Boca Raton 1992.
	\bibitem{GT} D. Gilbarg and N. S. Trudinger, {\it Elliptic Partial Differential Equations of Second Order}, Springer-Verlag, Berlin, 1983.
		\bibitem{J} A. J\"{u}ngel,
	The boundedness-by-entropy method for cross-diffusion systems, {\it Nonlinearity }, {\bf 28}(2015),
	1963-2001.
	\bibitem{PX} B. C. Price and X. Xu, Global existence theorem for a  model governing the motion of two cell populations, {\it  Kinet. Relat. Models}, {\bf 13}  (2020), no. 6, 1175–1191. 
	\bibitem{S} J. Simon,  Compact sets in the space
	$L^p(0,T;B)$, {\it Ann. Mat. Pura Appl.}, {\bf 146}(1987), 65-96.
	\bibitem{X4} X. Xu, 
	 A strongly degenerate system involving an equation of parabolic
		type and an equation of elliptic type, {\it Commun. Partial
	Differential Equations}, {\bf 18} (1993), 199-213.
\bibitem{X5} X. Xu, 
Mixed boundary conditions for a simplified quantum energy-transport model in multi-dimensional domains, {\it Commun. Math. Sci.}, {\bf 15}(2017), 635-663.
\end{thebibliography}
\end{document}